\tikzset{
  vertex/.style={circle,draw=black,fill=black!12,inner sep=2pt,minimum size=20pt,font=\small},
  edge/.style={thick}
}
\newcounter{vidx}
\newcommand{\vlabel}{\stepcounter{vidx}$v_{\thevidx}$}
\DeclareMathOperator{\sgn}{sgn}
\DeclareMathOperator{\Real}{Re}
\newcommand{\R}{\mathbb{R}}
\newcommand{\C}{\mathbb{C}}
\newcommand{\AS}{Atiyah\textendash Sutcliffe }
\newcommand{\norm}[1]{\lVert #1 \rVert}
\title{Finite graphs and configurations of points}
\author{Joseph Malkoun}
\address{Department of Mathematics and Statistics\\
Faculty of Natural and Applied Sciences\\
Notre Dame University-Louaize, Zouk Mosbeh\\
P.O.Box: 72, Zouk Mikael,\\
Lebanon}
\email{joseph.malkoun@ndu.edu.lb}
\date{\today}
\newtheorem{theorem}{Theorem}[section]
\newtheorem{corollary}[theorem]{Corollary}
\newtheorem{conjecture}{Conjecture}[part]
\newtheorem{lemma}[theorem]{Lemma}
\newtheorem{proposition}[theorem]{Proposition}
\theoremstyle{remark}
\theoremstyle{definition}
\newtheorem{definition}[theorem]{Definition}
\begin{document}

\begin{abstract} We generalize the Atiyah problem on configurations and the related Atiyah\textendash Sutcliffe conjectures 1 and 2 using finite graphs, configurations of points and tensors. Our conjectures are intriguing geometric inequalities, defined using the pairwise directions of the configuration of points, just as in the original problem. The generalization of the Atiyah determinant to our setting is no longer a determinant. We call it the $G$-amplitude function, where $G$ is a finite simple graph, in analogy with probability amplitudes in quantum physics. If $G = K_n$ is the complete graph with $n$ vertices, we recover the Atiyah\textendash Sutcliffe conjectures 1 and 2.
\end{abstract}

\maketitle

\section{Introduction}

The Atiyah problem on configurations of points saw the light of day towards the beginning of this century, cf. \cite{Ati-2000}, \cite{Ati-2001} and \cite{Ati-Sut-2002}. Its origin lies in physics, in an article by Sir Michael Berry and Jonathan Robbins (\cite{BR1997}). We briefly go over the problem in this section.

Let $n \geq 2$ be an integer. Denote by $C_n(\R^3)$ the (ordered) configuration space of $n$ distinct points in $\R^3$. Let
\[ \mathbf{x} := (\mathbf{x}_1, \dots, \mathbf{x}_n) \in C_n(\R^3), \]
i.e. $\mathbf{x}_i \in \R^3$ for $i = 1, \dots, n$ and the $\mathbf{x}_i$ are distinct.

For $i, j \in [n] := \{1, \dots, n\}$ distinct, let
\[ v_{ij} := \frac{\mathbf{x}_j - \mathbf{x}_i}{\lVert \mathbf{x}_j - \mathbf{x}_i \rVert} \in S^2. \]

We identify $S^2$ with the Riemann sphere using, for example, stereographic projection from the ``North'' pole, i.e. point $(0, 0, 1)^T$. Let $t_{ij}$ denote the point on the Riemann sphere corresponding to $v_{ij}$. We then define, for $i = 1, \dots, n$,
\[ p_i(t) = \prod_{j \neq i} (t - t_{ij}). \]

Sir Michael Atiyah conjectured that, no matter which $\mathbf{x} \in C_n(\R^3)$ one starts with, the $p_i$ ($1 \leq i \leq n$) are linearly independent over $\mathbb{C}$. This is known as Conjecture 1.

Let $A$ be the complex $n$ by $n$ matrix having the coefficients of $p_j$ as its $j$-th column. The determinant of $A$, after proper normalization, is known as the Atiyah determinant function, and denoted by $D$. The function $D$ is a smooth complex-valued function on $C_n(\R^3)$, which is invariant under the symmetric group $S_n$ (which permutes the points $\mathbf{x}_i$ comprising a configuration $\mathbf{x}$), is also invariant under proper Poincar\'{e} transformations in $\mathbb{R}^3$ and gets complex conjugated under improper ones.

A stronger conjecture was made by Atiyah and Sutcliffe in \cite{Ati-Sut-2002}, namely that
\[ |D(\mathbf{x})| \geq 1, \]
for any $\mathbf{x} \in C_n(\R^3)$. The latter is known as Conjecture 2. It is clear that Conjecture 2 implies Conjecture 1.

There is an interesting conjecture, known as Conjecture 3, which is stronger than Conjecture 2 and is quite interesting, but we shall refrain from discussing it here.

We note that Conjectures 1 and 2 are known if $n \leq 4$ (cf. \cite{KhuJoh2014}, \cite{EasNor2001}, \cite{Mal2023} and talks by D. Svrtan, which can be found online, on the $n = 4$ and planar $n = 5$ cases).

Our main motivation in this work is in generalizing the Atiyah problem on configurations using finite graphs. We hope this will shed some more light on the problem, and believe that the conjectured geometric inequalities we formulate in this work are interesting in their own rights.

Our construction makes use of finite simple graphs, configurations of points in $\R^3$ and tensors. We will define a complex-valued function which we call ``amplitude'' on a configuration space of points associated to a finite simple graph and conjecture some interesting properties of this function. If the graph is the complete graph, our conjectures A and B (to be defined in section \ref{conjectures}) reduce to Conjectures 1 and 2.

\subsection*{Acknowledgements}

The author is endebted to Dennis Sullivan, Peter Olver and Niky Kamran for reading and commenting on several of his ideas regarding the Atiyah problem on configurations, including this one. He would also like to thank Stephen Drury for informing him about Marcus's inequality and the related work by Elliott Lieb and his famous conjecture. He also acknowledges having had illuminating discussions with Felix Huber, Jonathan Robbins and John Baez on the problem from a more physical perspective. The author acknowledges the help of ChatGPT, version 5.3, for part of the proof of Conjecture B for the linear graph with $n = 5$ vertices. He also made use of an older version of ChatGPT to help him write a program to generate all finite graphs for a given number of vertices $n$. 

\section{The amplitude function associated to a finite simple graph}

Let $G = (V, E)$ denote a non-oriented finite simple graph with set of vertices $V$ and set of edges $E$. We thus do not allow for loops (an edge joining a vertex to itself), nor do we allow for multiple edges between two distinct vertices. Note that the construction we will present for the amplitude function (to be defined later in this article), as well as the related conjectures, are sensible even if the graph has multiple edges, but we did not do many numerical studies with graphs having multiple edges.

Assume that
\[ V = \{v_1, \dots, v_n \}. \]
We thus can represent each edge $e \in E$ by an unordered pair of (distinct) vertices.

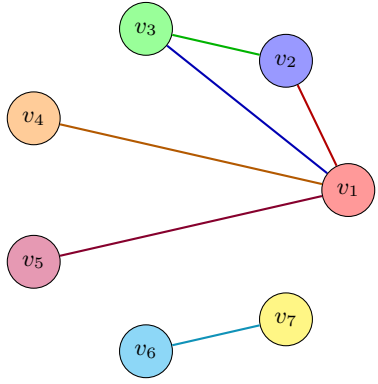
\begin{figure}
\begin{center}
\begin{tikzpicture}[scale=1]
  \def\n{7}      
  \def\r{2.2}    

  \readlist\vertexcolors{red!40, blue!40, green!40, orange!40, purple!40, cyan!40, yellow!60}
  \readlist\edgecolors{red!70!black, blue!70!black, green!70!black, orange!70!black, purple!70!black, cyan!70!black}

  \def\edges{1/2, 1/3, 2/3, 1/4, 1/5, 6/7}

 \foreach \i in {1,...,\n}{
    \pgfmathtruncatemacro{\cindex}{mod(\i-1,\vertexcolorslen)+1}
    \edef\thisfill{\vertexcolors[\cindex]}
    \node[vertex, fill=\thisfill] (v\i) at ({360/\n*(\i-1)}:\r) {$v_{\i}$};
  }

  \foreach [count=\ei] \u/\v in \edges{
    \pgfmathtruncatemacro{\ecindex}{mod(\ei-1,\edgecolorslen)+1}
    \edef\thisdraw{\edgecolors[\ecindex]}
    \draw[edge, draw=\thisdraw] (v\u) -- (v\v);
  }

\end{tikzpicture}
\caption{A finite simple graph $G$ with $7$ vertices and $6$ edges.}
\label{fig:graph1}
\end{center}
\end{figure}

Fix a non-oriented finite simple graph $G = (V, E)$, with $n = |V|$. We denote by $C_G(\R^3)$ the following configuration space
\[ C_G(\R^3) = \{ \mathbf{x}: V \to \R^3;\, \text{$\mathbf{x}(v_i) \neq \mathbf{x}(v_j)$ whenever $\{v_i, v_j\} \in E$} \}. \]
We will construct a smooth complex-valued function $\mathcal{A}_G$ on $C_G(\R^3)$, which we call the $G$-amplitude function. Let
\[ \mathbf{x}_i := \mathbf{x}(v_i) \]
and write
\[ \mathbf{x} = (\mathbf{x}_1, \dots, \mathbf{x}_n), \]
by a slight abuse of notation. We let
\begin{align}
\sigma_1 &= \begin{pmatrix} 0 & 1 \\ 1 & 0 \end{pmatrix} \\
\sigma_2 &= \begin{pmatrix} 0 & -i \\ i & 0 \end{pmatrix} \\
\sigma_3 &= \begin{pmatrix} 1 & 0 \\ 0 & -1 \end{pmatrix}
\end{align}
be the Pauli matrices. We also denote by $\vec{\sigma} = (\sigma_1, \sigma_2, \sigma_3)$ the $3$-vector of Pauli matrices.

Given $i, j \in [n] := \{1, \dots, n \}$ distinct, let
\begin{equation} v_{ij} = \frac{\mathbf{x}_j - \mathbf{x}_i}{\norm{\mathbf{x}_j - \mathbf{x}_i}} \in S^2. \end{equation}
Then $v_{ij}.\vec{\sigma}$ is an hermitian $2$ by $2$ matrix with eigenvalues $\pm 1$. Denote by $\psi_{ij}$ a choice of normalized $1$-eigenvector of $v_{ij}.\vec{\sigma}$. In other words
\begin{equation} (v_{ij}.\vec{\sigma}) \psi_{ij} = \psi_{ij}. \end{equation}
Note that $\psi_{ij} \in S^3 \subset \mathbb{C}^2$ is defined up to a phase factor.

Let $S_G = \{ (i, j) ;\, \{v_i, v_j\} \in E \}$. Thus for example, if $G$ is as in fig. \ref{fig:graph1}, then
\[ S_G = \{ (1, 2), (1, 3), (1, 4), (1, 5), (2, 1), (2, 3), (3, 1), (3, 2), (4, 1), (5, 1), (6, 7), (7, 6) \}. \]
Let $P_G$ be the group of all permutations of $S_G$ which preserve the "source", i.e. the first index, of each element in $S_G$.

Let $Q_G$ be the group of all permutations of $S_G$ which map any element $(i, j) \in S_G$ to either $(i, j)$ or $(j, i)$. In other words, if one partitions $S_G$ as the union of sets of the form $\{ (i, j), (j, i) \}$, with each such set corresponding to an (unordered) edge of $G$, then $Q_G$ is the group of all permutations of $S_G$ which preserve this partition.

Let
\begin{equation} \widetilde{T}_G(\mathbf{x}) = \otimes_{(i, j) \in S_G} \psi_{ij} \in \bigotimes_{(i, j) \in S_G}\C^2, \end{equation}
which is defined up to a phase factor due to the phase factor ambiguity of each factor $\psi_{ij}$. It is possible to remove this phase ambiguity by proper normalization. Let $\omega \in \Lambda^2(\C^2)^*$ be defined by
\begin{equation} \omega\left(\begin{pmatrix} u_1 \\ v_1 \end{pmatrix}, \begin{pmatrix} u_2 \\ v_2 \end{pmatrix}\right) := u_1 v_2 - u_2 v_1. \label{Omega} \end{equation}
Choose a directed graph, say $\Gamma$, whose underlying undirected graph (i.e. by ``forgetting'' that each edge has a preferred direction) is $G$. Note that, using $\Gamma$, for each $\{(i, j), (j, i) \} \subseteq S_G$, precisely one of $(i, j)$, $(j, i)$ corresponds to the preferred direction of the corresponding edge. For example, if the edge goes from vertex $v_i$ to $v_j$ (resp. $v_j$ to $v_i$), then $(i, j)$ (resp. $(j, i)$) corresponds to that preferred direction. We define $\Delta_\Gamma$ to be the subset of $S_G$ corresponding to the chosen edge directions in $\Gamma$. Then, if one defines
\begin{equation} T_\Gamma(\mathbf{x}) :=  \otimes_{(i, j) \in S_G} \psi_{ij} / \prod_{(i, j) \in \Delta_\Gamma} \omega(\psi_{ij}, \psi_{ji}), \label{def-T} \end{equation}
then $T_\Gamma(\mathbf{x})$ is well defined, including its phase. We also note that the denominator does not vanish, since $\psi_{ij}$ and $\psi_{ji}$ are orthogonal, being the $\pm 1$ eigenvectors of $v_{ij}.\vec{\sigma}$, and therefore linearly independent.

A permutation $\sigma$ of $S_G$ acts on the right on an element in $\bigotimes_{(i, j) \in S_G}\C^2$, say 
\[ T = \otimes_{(i, j) \in S_G} \phi_{ij}, \] 
by permuting the corresponding vectors $\phi_{ij}$. More precisely
\[ T.\sigma = \otimes_{(i, j) \in S_G} \phi_{\sigma(i, j)}. \]
This action extends by linearity on more general tensors, which are not necessarily decomposable.
If one has two permutations, say $\sigma_i$ ($i = 1, 2$) of $S_G$, then
\[ (T.\sigma_1).\sigma_2 = T.(\sigma_1 \sigma_2), \]
so this defines a well defined right action of the permutation group of $S_G$ on $\bigotimes_{(i, j) \in S_G}\C^2$.

We now define
\begin{align} 
p_G &= \sum_{\sigma \in P_G} \sigma \in \mathbb{Z}[P_G]  \\
q_G &= \sum_{\tau \in Q_G} \sgn(\tau) \,\tau \in \mathbb{Z}[Q_G].
\end{align}
We finally associate to $\mathbf{x}$ the symmetrization of $T_\Gamma(\mathbf{x})$ by first applying $p_G$, then $q_G$, i.e.
\begin{equation} \mathbf{x} \mapsto T_\Gamma(\mathbf{x}).(p_G q_G). \end{equation}
But $T_\Gamma(\mathbf{x}).(p_G q_G)$ lives in a complex one-dimensional subspace, the image of $q_G$, so we have essentially defined a complex-valued function on $C_G(\R^3)$, once one chooses a nonzero element in the image of $q_G$.

We may proceed in another equivalent way. We first form the tensor $T_\Gamma(\mathbf{x}).p_G$. Then, we enumerate the edges. For example, assume that
$$ E = \{ e_1, \dots, e_m \}. $$
Let $e \in E$, so that $e = \{v_i, v_j \}$, which is oriented in $\Gamma$ from $v_i$ to $v_j$, say. Denote by $\omega_e$ the process of taking a tensor having in particular indices labeled by $(i, j)$ and $(j, i)$ and then contracting the index labeled by $(i, j)$ with that labeled by $(j, i)$ (in that order) using the complex skew-symmetric bilinear form $\omega$. Since we are acting by permutations of $S_G$ on the right, it makes sense to also act by $\omega_e$ on the right, so that for example in the composition $\omega_{e_1} \circ \omega_{e_2}$, $\omega_{e_1}$ is applied first and then $\omega_{e_2}$, unlike the more standard convention. We then define
\begin{equation} \mathcal{A}_G(\mathbf{x}) = (T_\Gamma(\mathbf{x}).p_G) \omega_{e_1} \circ \dots \circ \omega_{e_m}. \label{def-amp} \end{equation}
Note that the right-hand side does not depend on a choice of $\Gamma$, since a different choice would lead to a sign factor in the normalization factor of $T_\Gamma(\mathbf{x})$ in \eqref{def-T} and the same sign factor arising from applying the $\omega_e$ in \eqref{def-amp}, so that the two sign factors cancel out. Moreover, the right-hand side does not depend on how we enumerate the edges. We have thus associated to each non-oriented finite simple graph $G$ a complex-valued function $\mathcal{A}_G$ on $C_G(\R^3)$, which we call the $G$-amplitude function on $C_G(\R^3)$.

We now list some properties of $\mathcal{A}_G$. If $\sigma$ is any permutation of $V$, we define
$$ \mathbf{x}.\sigma = (\mathbf{x}_{\sigma(1)}, \dots, \mathbf{x}_{\sigma(n)}).$$
This defines a right action of the permutation group of $V$ on $C_G(\R^3)$. 

Given a permutation $\sigma$ of $V$, we define $G.\sigma$ to be the graph with set of vertices $V$, but with set of edges
$$ E.\sigma = \{ \{ v_{\sigma(i)}, v_{\sigma(j)} \} ; \, \{v_i, v_j\} \in E \}. $$

Similarly, if $\Gamma = (V, \mathcal{E})$ is an oriented graph, we define $\Gamma.\sigma$ to be the oriented graph with set of vertices $V$, but with set of edges
$$ \mathcal{E}.\sigma = \{ (v_{\sigma(i)}, v_{\sigma(j)}) ; \, (v_i, v_j) \in \mathcal{E} \}. $$

A permutation $\sigma$ of $V$ is said to be a \emph{symmetry} of $G$ if $G.\sigma = G$ or, equivalently, if
$$ \forall i, j \in [n], v_i \text{ is a $G$-neighbor of } v_j \iff v_{\sigma(i)} \text{ is a $G$-neighbor of } v_{\sigma(j)}, $$
where two vertices in $G$ are $G$-neighbors iff there is an edge in $G$ joining them. It is not difficult to obtain the following.

\begin{proposition} For any permutation $\sigma$ of $V$, we have
\begin{equation} T_{\Gamma.\sigma}(\mathbf{x}.\sigma) = T_\Gamma(\mathbf{x}).\sigma. \end{equation}
It then follows that
\begin{equation} \mathcal{A}_{G.\sigma}(\mathbf{x}.\sigma) = \mathcal{A}_G(\mathbf{x}).\end{equation}
\end{proposition}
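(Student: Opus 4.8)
The plan is to prove the two equations in order, deriving the first one \eqref{eq1} at the level of the unnormalized tensor and its normalization, and then deducing \eqref{eq2} from it by tracking how the permutation $\sigma$ intertwines all the remaining pieces of the construction: the symmetrizer $p_G$, the edge-contractions $\omega_e$, and the eigenvector data $\psi_{ij}$.

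\medskip

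\emph{Step 1: the eigenvector data transforms correctly.} The key elementary observation is that permuting the configuration by $\sigma$ relabels the pairwise directions by $v_{\sigma(i)\sigma(j)}(\mathbf{x}.\sigma) = v_{ij}(\mathbf{x})$, directly from the definition $v_{ij} = (\mathbf{x}_j - \mathbf{x}_i)/\norm{\mathbf{x}_j - \mathbf{x}_i}$ and $(\mathbf{x}.\sigma)_i = \mathbf{x}_{\sigma(i)}$. Hence $v_{\sigma(i)\sigma(j)}(\mathbf{x}.\sigma).\vec{\sigma}$ is the \emph{same} hermitian matrix as $v_{ij}(\mathbf{x}).\vec{\sigma}$, so one may (and we do) choose the normalized $1$-eigenvector $\psi_{\sigma(i)\sigma(j)}$ for the permuted configuration to be literally $\psi_{ij}$ for the original one. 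With that choice, the unnormalized tensor $\widetilde{T}_{G.\sigma}(\mathbf{x}.\sigma) = \otimes_{(i,j)\in S_{G.\sigma}}\psi_{ij}$ equals $\widetilde{T}_G(\mathbf{x})$ up to the reordering of tensor factors induced by $\sigma$, which is exactly the right action defined in the paper; and since $S_{G.\sigma}$ is the image of $S_G$ under $\sigma$ applied componentwise, the normalization factors $\prod_{(i,j)\in\Delta_{\Gamma.\sigma}}\omega(\psi_{ij},\psi_{ji})$ and $\prod_{(i,j)\in\Delta_\Gamma}\omega(\psi_{ij},\psi_{ji})$ coincide (the product is over the same set of factors, just indexed differently, since $\Delta_{\Gamma.\sigma}$ is the $\sigma$-image of $\Delta_\Gamma$). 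This gives \eqref{eq1}.

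\medskip

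\emph{Step 2: from the tensor identity to the amplitude identity.} Here the main thing to verify is that $\sigma$ conjugates the combinatorial operators of the construction for $G$ into those for $G.\sigma$. Concretely, let $c_\sigma$ denote the bijection $S_G \to S_{G.\sigma}$, $(i,j)\mapsto(\sigma(i),\sigma(j))$. Conjugation by $c_\sigma$ carries $P_G$ isomorphically onto $P_{G.\sigma}$ (preserving ``source'' since $c_\sigma$ preserves first components) and carries $Q_G$ isomorphically onto $Q_{G.\sigma}$ while preserving signs (since it preserves the edge-partition of $S_G$). Because the right action of permutations of $S_G$ is covariant under this relabeling — i.e. $(\widetilde{T}_G(\mathbf{x}).\sigma).(c_\sigma\pi c_\sigma^{-1}) = (\widetilde{T}_G(\mathbf{x}).\pi).\sigma$ as elements of $\bigotimes_{S_{G.\sigma}}\C^2$ — one gets $p_{G.\sigma} = c_\sigma\, p_G\, c_\sigma^{-1}$ and likewise for $q$, hence $T_{\Gamma.\sigma}(\mathbf{x}.\sigma).p_{G.\sigma} = (T_\Gamma(\mathbf{x}).p_G).\sigma$ by Step 1. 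Finally, the edge-contraction $\omega_e$ for an edge $e$ of $G$, which contracts the $(i,j)$ slot against the $(j,i)$ slot, matches $\omega_{\sigma(e)}$ for the corresponding edge of $G.\sigma$, which contracts the $(\sigma(i),\sigma(j))$ slot against the $(\sigma(j),\sigma(i))$ slot — the slots being the images under $c_\sigma$ — so applying $\omega_{e_1}\circ\cdots\circ\omega_{e_m}$ on the $G$ side and $\omega_{\sigma(e_1)}\circ\cdots\circ\omega_{\sigma(e_m)}$ on the $G.\sigma$ side produces the same scalar. Since by the paper's remark $\mathcal{A}_{G.\sigma}$ does not depend on the chosen enumeration of the edges of $G.\sigma$ nor on the chosen orientation $\Gamma.\sigma$, we may use this particular enumeration and orientation, and conclude $\mathcal{A}_{G.\sigma}(\mathbf{x}.\sigma) = \mathcal{A}_G(\mathbf{x})$, which is \eqref{eq2}.

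\medskip

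\emph{Where the difficulty lies.} None of the individual steps is deep — this is really a bookkeeping argument — but the one place where care is genuinely needed is the interplay between the \emph{right} action convention (for both the permutations and the $\omega_e$) and the conjugation $\pi\mapsto c_\sigma\pi c_\sigma^{-1}$: one must check that the covariance identity $(\widetilde{T}.\pi).\sigma = (\widetilde{T}.\sigma).(c_\sigma\pi c_\sigma^{-1})$ comes out with the relabeling on the correct side, so that $p$ and $q$ really do conjugate rather than getting an unwanted inverse, and similarly that the order in which the $\omega_{e_k}$ are composed is respected under $c_\sigma$. I would therefore write that verification out explicitly on decomposable tensors (where both sides are just $\otimes$ of permuted/contracted factors) and then invoke linearity, rather than leave it to the reader. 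The phase-independence claim in \eqref{eq2} also rests on the fact — noted in Step 1 — that the normalization denominators in \eqref{def-T} for $\Gamma$ and $\Gamma.\sigma$ are \emph{equal} under the chosen eigenvector convention, so no stray phase is introduced; this is worth stating explicitly as the reason \eqref{eq1} holds on the nose (with phase) and not merely projectively.
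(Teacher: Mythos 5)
Your strategy is the right one, and indeed the only one available: transport the eigenvector data along the relabeling $c_\sigma:(i,j)\mapsto(\sigma(i),\sigma(j))$, check that the normalization denominators in \eqref{def-T} agree, and verify that $P_G$, $Q_G$ and the contractions $\omega_e$ are carried by $c_\sigma$ onto their counterparts for $G.\sigma$. The paper offers no written proof (it only asserts the statement ``is not difficult to obtain''), so your write-up is essentially the intended argument, and your Step 2 (conjugation preserves the ``source'' condition, the edge partition, and the sign, and the contractions match slot by slot) is sound.

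There is, however, a concrete error in Step 1, and it sits exactly at the point you yourself flag as the delicate one. From $(\mathbf{x}.\sigma)_i = \mathbf{x}_{\sigma(i)}$ one gets
\begin{equation*}
v_{ij}(\mathbf{x}.\sigma) \;=\; \frac{\mathbf{x}_{\sigma(j)} - \mathbf{x}_{\sigma(i)}}{\norm{\mathbf{x}_{\sigma(j)} - \mathbf{x}_{\sigma(i)}}} \;=\; v_{\sigma(i)\,\sigma(j)}(\mathbf{x}),
\end{equation*}
equivalently $v_{\sigma^{-1}(i)\,\sigma^{-1}(j)}(\mathbf{x}.\sigma) = v_{ij}(\mathbf{x})$. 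Your claimed identity $v_{\sigma(i)\,\sigma(j)}(\mathbf{x}.\sigma) = v_{ij}(\mathbf{x})$ instead equals $v_{\sigma^2(i)\,\sigma^2(j)}(\mathbf{x})$ and is false for generic $\mathbf{x}$ unless $\sigma^2 = \mathrm{id}$. So the correct eigenvector convention is $\psi_{ij}(\mathbf{x}.\sigma) := \psi_{\sigma(i)\,\sigma(j)}(\mathbf{x})$, not the one you wrote, and the relabeling of tensor slots goes in the opposite direction. If you carry the corrected identity through the paper's literal definitions you will find a genuine clash of conventions: $E.\sigma$ as defined in the paper is a left action while $\mathbf{x}.\sigma$ is a right action, so $\mathbf{x}.\sigma$ actually lies in $C_{G.\sigma^{-1}}(\R^3)$, and \eqref{eq1} only typechecks with $\sigma^{-1}$ in one of the two places. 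Your false intermediate identity silently compensates for this, which is why your computation appears to land on the stated formula. None of this affects the conclusion \eqref{eq2}, which is what the corollary uses and which is invariant under replacing $\sigma$ by $\sigma^{-1}$; but since the entire content of the proposition is this bookkeeping, you should redo Step 1 with the correct direction of relabeling, state explicitly which of $\sigma$, $\sigma^{-1}$ appears in \eqref{eq1}, and then your Step 2 goes through verbatim with $c_\sigma$ adjusted consistently.
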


We note that
\[ T_{\Gamma.\sigma}(\mathbf{x}) = \operatorname{sgn}(\sigma) T_{\sigma}(\mathbf{x}). \]
We thus deduce, using the previous proposition, the following equivariance property
\begin{proposition} For any permutation $\sigma$ of $V$ which is a symmetry of $G$, we have
\[ T_{\Gamma}(\mathbf{x}.\sigma) = \operatorname{sgn}(\sigma) T_\Gamma(\mathbf{x}).\sigma. \]
\end{proposition}

We also note the following important corollary.
\begin{corollary}  The $G$-amplitude function $\mathcal{A}_G$ is invariant under the group of symmetries of $G$. In other words,
\begin{equation} \mathcal{A}_G(\mathbf{x}.\sigma) = \mathcal{A}_G(\mathbf{x}), \end{equation}
for all $\mathbf{x} \in C_G(\R^3)$, whenever $\sigma$ is a symmetry of $G$.
\end{corollary}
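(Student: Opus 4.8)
The plan is to obtain this statement as an immediate consequence of the preceding Proposition, specifically of the equivariance identity \eqref{eq2}. Recall that \eqref{eq2} reads $\mathcal{A}_{G.\sigma}(\mathbf{x}.\sigma) = \mathcal{A}_G(\mathbf{x})$ for an \emph{arbitrary} permutation $\sigma$ of $V$. If $\sigma$ is a symmetry of $G$, then by definition $G.\sigma = G$, so the left-hand side of \eqref{eq2} becomes $\mathcal{A}_G(\mathbf{x}.\sigma)$, and we conclude $\mathcal{A}_G(\mathbf{x}.\sigma) = \mathcal{A}_G(\mathbf{x})$ for every $\mathbf{x} \in C_G(\R^3)$, which is exactly the assertion.

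First I would record the small preliminary point that makes the statement well posed: the configuration space $C_G(\R^3)$ depends only on the edge set $E$ of $G$, so the hypothesis $G.\sigma = G$ (equivalently $E.\sigma = E$) forces $C_{G.\sigma}(\R^3) = C_G(\R^3)$ as sets, and hence $\mathcal{A}_{G.\sigma}$ and $\mathcal{A}_G$ are literally the same function on the same domain. In particular the right action $\mathbf{x} \mapsto \mathbf{x}.\sigma$ carries $C_G(\R^3)$ into itself whenever $\sigma$ is a symmetry, so that $\mathcal{A}_G(\mathbf{x}.\sigma)$ is defined. After this remark the proof is a one-line substitution into \eqref{eq2}. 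One could additionally note, if desired, that the symmetries of $G$ form a group under composition, so that the statement indeed expresses invariance under a group action; but this plays no role in the argument itself.

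Since the entire analytic and combinatorial content has already been absorbed into the Proposition (and, upstream of it, into the observation from the construction that $\mathcal{A}_G$ does not depend on the auxiliary orientation $\Gamma$ or on the enumeration of edges), there is essentially no obstacle here. The only thing that warrants a word of care is bookkeeping: one should make sure that the definition of ``symmetry of $G$'' used in the statement is exactly the condition $G.\sigma = G$ under which \eqref{eq2} degenerates to the desired identity, and that no choice of $\Gamma$ sneaks back in — it does not, precisely because $\mathcal{A}_G$ was shown to be $\Gamma$-independent when it was first defined.
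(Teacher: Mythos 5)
Your proposal is correct and is exactly the (implicit) argument the paper intends: the corollary is stated as an immediate consequence of \eqref{eq2}, obtained by substituting $G.\sigma = G$ when $\sigma$ is a symmetry. Your preliminary remark that $C_{G.\sigma}(\R^3) = C_G(\R^3)$ (so that both sides are defined on the same domain) is a sensible piece of bookkeeping that the paper leaves unstated.
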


Let $g \in SU(2)$. Note that $SU(2)$ acts on $\R^3$ via its adjoint representation, which thus induces an action of $SU(2)$ on $C_G(\R^3)$. We will denote this action simply by
$$ (g, x) \mapsto g.x. $$

Moreover, $SU(2)$ acts naturally on $\C^2$, so that $g \in SU(2)$ acts on each $\psi_{ij}$ ($(i, j) \in S_G$) simultaneously and thus also on $\bigotimes_{(i, j) \in S_G} \C^2$. We will denote this action simply by
$$ (g, T) \mapsto g. T. $$

We then have
\begin{proposition}For any $g \in SU(2)$, we have
\begin{equation} T_\Gamma(g.\mathbf{x}) = g.T_\Gamma(\mathbf{x}), \end{equation}
for any $\mathbf{x} \in C_G(\R^3)$, i.e. $T_\Gamma$ is equivariant under the action of $SU(2)$. We also have
\begin{equation} \mathcal{A}_G(g.\mathbf{x}) = \mathcal{A}_G(\mathbf{x}), \end{equation}
for any $\mathbf{x} \in C_G(\R^3)$. In other words, $\mathcal{A}_G$ is invariant under $SU(2)$, but $SU(2)$ acts on $C_G(\R^3)$ via $SO(3)$, so $\mathcal{A}_G$ is invariant under $SO(3)$.
\end{proposition}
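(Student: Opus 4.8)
The plan is to first establish that the eigenvectors $\psi_{ij}$ are $SU(2)$-equivariant up to a phase, then propagate this through the normalization in \eqref{def-T} by a careful phase count, and finally observe that every operation entering \eqref{def-amp} commutes with the diagonal $SU(2)$-action.

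First I would recall that the adjoint action of $g \in SU(2)$ on $\R^3 \cong \mathfrak{su}(2)$ is characterized by the intertwining identity $g\,(\mathbf{w}.\vec{\sigma})\,g^{-1} = (R_g\mathbf{w}).\vec{\sigma}$ for all $\mathbf{w} \in \R^3$, where $R_g \in SO(3)$ is the rotation corresponding to $g$ under the double cover $SU(2) \to SO(3)$; this is exactly the statement that $\mathbf{w} \mapsto \mathbf{w}.\vec{\sigma}$ intertwines the $SO(3)$-action on $\R^3$ with conjugation. Writing $\mathbf{x}' = g.\mathbf{x}$ and $v'_{ij}$ for the associated directions, one gets $v'_{ij} = R_g v_{ij}$ (norms are preserved since $R_g$ is orthogonal), hence $v'_{ij}.\vec{\sigma} = g\,(v_{ij}.\vec{\sigma})\,g^{-1}$. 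Applying this to $\psi_{ij}$ shows that $g\psi_{ij}$ is a unit $+1$-eigenvector of $v'_{ij}.\vec{\sigma}$, so a valid choice of $\psi'_{ij}$, and therefore $\psi'_{ij} = e^{i\theta_{ij}}\,g\psi_{ij}$ for some phase $\theta_{ij} \in \R$.

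Next I would track the phases. Substituting into \eqref{def-T}, the numerator $\otimes_{(i,j)\in S_G}\psi'_{ij}$ equals $\big(\prod_{(i,j)\in S_G} e^{i\theta_{ij}}\big)\, g.(\otimes_{(i,j)\in S_G}\psi_{ij})$, while each factor $\omega(\psi'_{ij},\psi'_{ji})$ in the denominator equals $e^{i\theta_{ij}}e^{i\theta_{ji}}\,\omega(g\psi_{ij}, g\psi_{ji}) = e^{i\theta_{ij}}e^{i\theta_{ji}}\,\omega(\psi_{ij},\psi_{ji})$, using that $\omega$ is the determinant form on $\C^2$ and $\det g = 1$, so $\omega$ is $SU(2)$-invariant. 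Since $\Delta_\Gamma$ contains exactly one of $(i,j)$, $(j,i)$ for each edge, the product over $(i,j)\in\Delta_\Gamma$ of $e^{i\theta_{ij}}e^{i\theta_{ji}}$ is precisely $\prod_{(i,j)\in S_G}e^{i\theta_{ij}}$, which matches the numerator's phase; the two cancel, giving $T_\Gamma(g.\mathbf{x}) = g.T_\Gamma(\mathbf{x})$ and proving the first claim.

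Finally, for the amplitude: the diagonal $SU(2)$-action on $\bigotimes_{(i,j)\in S_G}\C^2$ commutes with permutations of the tensor slots (hence with the right action of $p_G$), and it also commutes with each contraction $\omega_e$ because $\omega$ is $SU(2)$-invariant, i.e. $\omega_e(g.T) = g.\,\omega_e(T)$ on the corresponding reduced tensor space. Therefore $\mathcal{A}_G(g.\mathbf{x}) = (T_\Gamma(g.\mathbf{x}).p_G)\,\omega_{e_1}\circ\cdots\circ\omega_{e_m} = g.\big((T_\Gamma(\mathbf{x}).p_G)\,\omega_{e_1}\circ\cdots\circ\omega_{e_m}\big) = g.\mathcal{A}_G(\mathbf{x})$; but after all $m$ contractions the result lies in $\C$, on which $SU(2)$ acts trivially, so $\mathcal{A}_G(g.\mathbf{x}) = \mathcal{A}_G(\mathbf{x})$. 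Invariance under $SO(3)$ then follows since the $SU(2)$-action on $C_G(\R^3)$ factors through $SU(2) \to SO(3)$. I expect the only delicate point to be the bookkeeping of the phase factors $\theta_{ij}$ in the middle step; the rest is a formal consequence of the $SU(2)$-invariance of $\omega$ and of the commutation of the diagonal action with slot-permutations and contractions.
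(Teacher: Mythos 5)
Your proof is correct and follows the same route as the paper, whose entire proof is the one--line observation that $\omega$ is $SU(2)$-invariant; you have simply filled in the details that the paper leaves implicit (the intertwining identity $g(\mathbf{w}.\vec\sigma)g^{-1}=(R_g\mathbf{w}).\vec\sigma$, the phase bookkeeping in the normalization \eqref{def-T}, and the commutation of the diagonal action with slot permutations and contractions). Nothing to add.
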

\begin{proof} The complex skew-symmetric bilinear form $\omega$ is invariant under the action of $SU(2)$, from which the proposition follows. \end{proof}

Define $\tilde{j}$ to be the complex anti-linear map from $\C^2$ to itself mapping
\[ \tilde{j}(u, v)^T = (-\bar{v}, \bar{u})^T, \]
where $T$ denotes the transpose. Note that $\tilde{j}^2 = - \mathbf{1}$. We also denote by $\tilde{j}$ the induced action of the previous map on $\bigotimes_{(i, j) \in S_G} \C^2$.

We then have
\begin{proposition}
\[ T_\Gamma(-\mathbf{x}) = \tilde{j} T_\Gamma(\mathbf{x}), \]
for any $\mathbf{x} \in C_G(\R^3)$, from which we deduce that
\begin{equation} \mathcal{A}_G(-\mathbf{x}) = \overline{\mathcal{A}_G(\mathbf{x})}, \end{equation} 
for any $\mathbf{x} \in C_G(\R^3)$.
\end{proposition}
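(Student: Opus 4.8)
The plan is to trace how the building blocks of $T_\Gamma$ and $\mathcal{A}_G$ transform under $\mathbf{x}\mapsto-\mathbf{x}$, using $\tilde{j}$ as the bookkeeping device. First I would note that negating every point reverses every pairwise direction, $v_{ij}(-\mathbf{x})=-v_{ij}(\mathbf{x})$, so that $v_{ij}(-\mathbf{x}).\vec{\sigma}=-\,v_{ij}(\mathbf{x}).\vec{\sigma}$. Next I would record the algebraic fact that $\tilde{j}$ anticommutes with $v.\vec{\sigma}$ for every real $v\in\R^3$: one checks directly from the definitions that $\tilde{j}\sigma_k=-\sigma_k\tilde{j}$ for $k=1,2,3$, and then, because the components of $v$ are real and hence pass unchanged through the antilinear $\tilde{j}$, one gets $\tilde{j}\,(v.\vec{\sigma})=-(v.\vec{\sigma})\,\tilde{j}$ as maps $\C^2\to\C^2$. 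Consequently, if $\psi$ is the $(+1)$-eigenvector of $v.\vec{\sigma}$, then $\tilde{j}\psi$ is its $(-1)$-eigenvector, i.e.\ the $(+1)$-eigenvector of $(-v).\vec{\sigma}$. Taking $v=v_{ij}(\mathbf{x})$, this says precisely that $\tilde{j}\psi_{ij}(\mathbf{x})$ is an admissible choice of $\psi_{ij}(-\mathbf{x})$.

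Second, I would invoke the phase-independence of $T_\Gamma$ observed right after \eqref{def-T}: since that construction yields the same tensor whatever phases one picks for the individual $\psi_{ij}$, I am free to compute $T_\Gamma(-\mathbf{x})$ with the choice $\psi_{ij}(-\mathbf{x})=\tilde{j}\psi_{ij}(\mathbf{x})$. With that choice the numerator of \eqref{def-T} becomes $\otimes_{(i,j)\in S_G}\tilde{j}\psi_{ij}(\mathbf{x})=\tilde{j}\bigl(\otimes_{(i,j)\in S_G}\psi_{ij}(\mathbf{x})\bigr)$ by definition of the induced action of $\tilde{j}$ on the tensor product. For the denominator I would use the one-line identity $\omega(\tilde{j}\phi,\tilde{j}\chi)=\overline{\omega(\phi,\chi)}$, immediate from \eqref{Omega} (or from $\omega(\phi,\chi)=\langle\tilde{j}\phi,\chi\rangle$ together with $\tilde{j}^2=-\mathbf{1}$): it gives $\prod_{(i,j)\in\Delta_\Gamma}\omega(\psi_{ij}(-\mathbf{x}),\psi_{ji}(-\mathbf{x}))=\overline{\prod_{(i,j)\in\Delta_\Gamma}\omega(\psi_{ij}(\mathbf{x}),\psi_{ji}(\mathbf{x}))}$. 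Since $\tilde{j}$ is antilinear, dividing $\otimes\psi_{ij}(\mathbf{x})$ by a scalar $c$ and then applying $\tilde{j}$ produces $\tilde{j}(\otimes\psi_{ij}(\mathbf{x}))$ divided by $\bar{c}$; combining the last two displays gives exactly $T_\Gamma(-\mathbf{x})=\tilde{j}\,T_\Gamma(\mathbf{x})$.

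Third, for the statement about $\mathcal{A}_G$ I would feed $T_\Gamma(-\mathbf{x})=\tilde{j}T_\Gamma(\mathbf{x})$ into the formula \eqref{def-amp}. Because $\tilde{j}$ acts factorwise on $\bigotimes_{(i,j)\in S_G}\C^2$, it commutes with the right action of $p_G$, which only permutes tensor slots. Each contraction $\omega_e$ is $\C$-linear, and by $\omega(\tilde{j}\phi,\tilde{j}\chi)=\overline{\omega(\phi,\chi)}$ applying $\omega_e$ after $\tilde{j}$ yields the complex conjugate of the scalar one would get without $\tilde{j}$; since $\omega_{e_1}\circ\dots\circ\omega_{e_m}$ contracts all $m$ pairs of indices, the whole composite $T\mapsto (T.p_G)\,\omega_{e_1}\circ\dots\circ\omega_{e_m}$ sends $\tilde{j}T$ to $\overline{\mathcal{A}_G}$ whenever it sends $T$ to $\mathcal{A}_G$. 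This is cleanest to verify on decomposable tensors $\otimes_{(i,j)\in S_G}\phi_{ij}$, where each $\omega_{e_k}$ simply peels off a scalar factor $\omega(\cdot,\cdot)$ — or its conjugate after $\tilde{j}$ — leaving a decomposable tensor; one then extends by linearity over the sum $p_G=\sum_{\sigma\in P_G}\sigma$. Taking $T=T_\Gamma(\mathbf{x})$ gives $\mathcal{A}_G(-\mathbf{x})=\overline{\mathcal{A}_G(\mathbf{x})}$.

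I expect the only real obstacle to be careful bookkeeping of the antilinearity: keeping straight how the antilinear operator $\tilde{j}$ interacts with the $\C$-linear symmetrization $p_G$ and the $\C$-linear contractions $\omega_e$, and confirming that the conjugation introduced by the normalizing denominator of \eqref{def-T} and those introduced by the $\omega_e$'s in \eqref{def-amp} combine to give precisely one overall complex conjugation, with no stray signs. The eigenvector/anticommutation step is routine once the Pauli-matrix identities $\tilde{j}\sigma_k=-\sigma_k\tilde{j}$ are written down.
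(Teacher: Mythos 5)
Your proof is correct, and it supplies exactly the argument the paper leaves implicit (the proposition is stated without proof in the text): the anticommutation $\tilde{j}\,(v.\vec{\sigma})=-(v.\vec{\sigma})\,\tilde{j}$ identifies $\tilde{j}\psi_{ij}(\mathbf{x})$ as a valid choice of $\psi_{ij}(-\mathbf{x})$, the identity $\omega(\tilde{j}\phi,\tilde{j}\chi)=\overline{\omega(\phi,\chi)}$ handles both the normalizing denominator in \eqref{def-T} and the contractions in \eqref{def-amp}, and the antilinearity bookkeeping (including that $p_G$ has real coefficients and that two antilinear maps agreeing on decomposables agree everywhere) is all in order. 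Nothing is missing.
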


\begin{corollary}  The $G$-amplitude function $\mathcal{A}_G$ gets conjugated under improper orthogonal transformations. \end{corollary}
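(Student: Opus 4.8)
\emph{Proof proposal.} The plan is to deduce this quickly from the immediately preceding Proposition together with the $SO(3)$-invariance of $\mathcal{A}_G$ established earlier, using the elementary fact that every improper orthogonal transformation of $\R^3$ factors as $-\mathbf{1}$ composed with a rotation. Given $R \in O(3)$ with $\det R = -1$, acting on configurations by $R.\mathbf{x} := (R\mathbf{x}_1, \dots, R\mathbf{x}_n)$, I would set $R' := -R$. Since $\det R' = (-1)^3 \det R = 1$ we have $R' \in SO(3)$; the map $R'$, being invertible, preserves distinctness of the endpoints of each edge, so $R'.\mathbf{x} \in C_G(\R^3)$; and $R.\mathbf{x} = -(R'.\mathbf{x})$.

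I would then chain the two known facts:
\[ \mathcal{A}_G(R.\mathbf{x}) \;=\; \mathcal{A}_G\bigl(-(R'.\mathbf{x})\bigr) \;=\; \overline{\mathcal{A}_G(R'.\mathbf{x})} \;=\; \overline{\mathcal{A}_G(\mathbf{x})}, \]
the middle equality being the preceding Proposition applied to the configuration $R'.\mathbf{x}$, and the last one being $SO(3)$-invariance. The single point that deserves a word is that this last invariance is genuinely at our disposal: the earlier Proposition gives invariance under $SU(2)$, but $\ker\bigl(SU(2) \to SO(3)\bigr) = \{\pm\mathbf{1}\}$ acts trivially on $\R^3$, hence on $C_G(\R^3)$, so the $SU(2)$-action descends to $SO(3)$ and $\mathcal{A}_G(R'.\mathbf{x}) = \mathcal{A}_G(\mathbf{x})$ for every $R' \in SO(3)$. (Writing $R = R'' \circ (-\mathbf{1})$ with $R'' := -R$ instead of $R = (-\mathbf{1}) \circ R'$ leads to the same conclusion, since $-\mathbf{1}$ is central in $O(3)$.)

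Thus the corollary is a formality once the Proposition is available, and I expect no real obstacle in it: the substance, and hence the main difficulty of a self-contained account, lies in the Proposition. For that I would argue as follows. The map $\mathbf{x} \mapsto -\mathbf{x}$ replaces $v_{ij}$ by $v_{ji} = -v_{ij}$, and the quaternionic structure satisfies $\tilde{j}(v.\vec{\sigma}) = -(v.\vec{\sigma})\tilde{j}$ for real unit vectors $v$, so $\tilde{j}$ carries a normalized $(+1)$-eigenvector of $v_{ij}.\vec{\sigma}$ to a normalized $(+1)$-eigenvector of $v_{ji}.\vec{\sigma}$; hence one may \emph{choose} the eigenvector for the configuration $-\mathbf{x}$ at the pair $(i,j)$ to be $\tilde{j}\psi_{ij}$. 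Because $\omega(\tilde{j}u, \tilde{j}w) = \overline{\omega(u,w)}$, the normalizing product $\prod_{(i,j) \in \Delta_\Gamma}\omega(\psi_{ij}, \psi_{ji})$ in \eqref{def-T} is complex conjugated in step with the effect of the antilinear map $\tilde{j}$ on the numerator, which gives $T_\Gamma(-\mathbf{x}) = \tilde{j}\,T_\Gamma(\mathbf{x})$. One then applies $p_G$, which commutes with $\tilde{j}$ because permuting tensor factors commutes with applying $\tilde{j}$ factorwise, followed by the edge contractions $\omega_{e_k}$ of \eqref{def-amp}, for which $(\tilde{j}S)\,\omega_{e_k} = \tilde{j}(S\,\omega_{e_k})$ since both sides are complex-antilinear in $S$ and agree on decomposable tensors; as performing all $m$ contractions leaves a scalar, on which the induced $\tilde{j}$ is ordinary complex conjugation, one obtains $\mathcal{A}_G(-\mathbf{x}) = \overline{\mathcal{A}_G(\mathbf{x})}$, hence the Proposition and with it the corollary. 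The only genuinely delicate part of this is the bookkeeping of conjugations versus anti-linearity through the contraction step.
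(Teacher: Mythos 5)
\emph{Referee comment.} Your proposal is correct and follows exactly the route the paper intends: factor an improper orthogonal map as $-\mathbf{1}$ composed with a rotation, then combine the preceding Proposition ($\mathcal{A}_G(-\mathbf{x}) = \overline{\mathcal{A}_G(\mathbf{x})}$) with the $SO(3)$-invariance already established. Your supplementary sketch of the Proposition itself (via $\tilde{j}(v.\vec{\sigma})\tilde{j}^{-1} = -v.\vec{\sigma}$ and $\omega(\tilde{j}u,\tilde{j}w) = \overline{\omega(u,w)}$) is also sound and fills in details the paper leaves implicit.
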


It is also clear that both $T_\Gamma$ and $\mathcal{A}_G$ are invariant under translations in $\R^3$. We have thus proved
\begin{proposition} The $G$-amplitude function $\mathcal{A}_G$ is invariant under proper Poincar\'{e} transformations of $\R^3$ and gets complex conjugated under improper ones. \end{proposition}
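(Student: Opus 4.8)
The plan is to obtain this statement purely formally from the three preceding propositions, by writing an arbitrary Poincar\'e transformation of $\R^3$ as a composition of the elementary transformations whose effect on $\mathcal{A}_G$ we already control. Recall that every element of the Poincar\'e group of $\R^3$ has the form $\mathbf{y} \mapsto R\mathbf{y} + \mathbf{b}$ with $R \in O(3)$ and $\mathbf{b} \in \R^3$, that such a transformation is \emph{proper} exactly when $\det R = 1$, i.e. $R \in SO(3)$, and that acting on a configuration $\mathbf{x} \in C_G(\R^3)$ amounts to postcomposing each $\mathbf{x}(v_i)$ with this map. In particular the proper Poincar\'e group is generated by the translations together with $SO(3)$.

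First I would treat the proper case. By the remark immediately preceding the proposition, $\mathcal{A}_G$ is invariant under all translations of $\R^3$; by the proposition on the $SU(2)$-action it is invariant under $SO(3)$, since $SU(2)$ acts on $C_G(\R^3)$ through $SO(3)$. Decomposing a proper Poincar\'e transformation as a translation followed by a rotation and applying each invariance in turn yields invariance of $\mathcal{A}_G$ under the whole proper Poincar\'e group. For the improper case, write an improper Poincar\'e transformation as $\mathbf{x} \mapsto \tau(R\mathbf{x})$ with $\tau$ a translation and $R \in O(3)$, $\det R = -1$. Translation invariance gives $\mathcal{A}_G(\tau(R\mathbf{x})) = \mathcal{A}_G(R\mathbf{x})$, so it suffices to treat improper orthogonal transformations, which is exactly the content of the corollary following the proposition $T_\Gamma(-\mathbf{x}) = \tilde{j}\,T_\Gamma(\mathbf{x})$; equivalently, since $3$ is odd, $-I \in O(3)$ is improper, hence $R = (-I)R'$ with $R' = -R \in SO(3)$, and then $\mathcal{A}_G(R\mathbf{x}) = \mathcal{A}_G(R'(-\mathbf{x})) = \mathcal{A}_G(-\mathbf{x}) = \overline{\mathcal{A}_G(\mathbf{x})}$, using $SO(3)$-invariance and $\mathcal{A}_G(-\mathbf{x}) = \overline{\mathcal{A}_G(\mathbf{x})}$. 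Therefore $\mathcal{A}_G(\tau(R\mathbf{x})) = \overline{\mathcal{A}_G(\mathbf{x})}$.

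Since everything here is bookkeeping with facts already proved, I do not expect a genuine obstacle; the only points demanding a little care are that $-I$ is an \emph{improper} element of $O(3)$ precisely because the dimension $3$ is odd (in even dimension one would instead peel off a single hyperplane reflection, and the sign bookkeeping attached to $\tilde{j}$ would have to be revisited), and that complex conjugation commutes with the invariances already established — that is, $\overline{\mathcal{A}_G(g.\mathbf{x})} = \overline{\mathcal{A}_G(\mathbf{x})}$ for $g$ in the proper group — so that performing the rotation and translation after the antipodal map does not disturb the conjugation.
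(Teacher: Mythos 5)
Your proposal is correct and follows essentially the same route as the paper, which simply combines the already-established $SO(3)$-invariance, the conjugation under $\mathbf{x} \mapsto -\mathbf{x}$ (noting $-I$ is improper in odd dimension), and translation invariance. The paper leaves the bookkeeping implicit (``we have thus proved''), whereas you spell out the decomposition explicitly; there is no substantive difference.
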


We also note that $\mathcal{A}_G$ possesses a multiplicative property with respect to $G$.
\begin{proposition} If $G = G_1 \sqcup G_2$ is the disjoint union of two subgraphs, $G_1$ and $G_2$ (with no vertices in common, and thus no edges in common either), then
\[ \mathcal{A}_G(\mathbf{x}) = \mathcal{A}_{G_1}(\mathbf{x}) \mathcal{A}_{G_2}(\mathbf{x}), \]
for any $\mathbf{x} \in C_G(\R^3)$. \label{multiplicative} \end{proposition}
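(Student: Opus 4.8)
\emph{Proof proposal.} The plan is to show that the hypothesis $G = G_1 \sqcup G_2$ forces every stage of the construction of $\mathcal{A}_G$ to factor as a tensor product of the corresponding stages for $G_1$ and $G_2$, so that the final contracted scalar is literally a product. Write $G_k = (V_k, E_k)$, so $V = V_1 \sqcup V_2$ and $E = E_1 \sqcup E_2$. Since no edge of $G$ joins $V_1$ to $V_2$, the inequalities cutting out $C_G(\R^3)$ decouple, and there is a natural identification $C_G(\R^3) \cong C_{G_1}(\R^3) \times C_{G_2}(\R^3)$ under which $\mathbf{x}$ corresponds to the pair of restrictions $(\mathbf{x}|_{V_1}, \mathbf{x}|_{V_2})$; with the same abuse of notation as in the statement, the right-hand side of the claimed identity is then $\mathcal{A}_{G_1}(\mathbf{x}|_{V_1})\,\mathcal{A}_{G_2}(\mathbf{x}|_{V_2})$. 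Likewise $S_G = S_{G_1} \sqcup S_{G_2}$, which gives a canonical isomorphism $\bigotimes_{(i,j) \in S_G} \C^2 \cong \bigl(\bigotimes_{(i,j) \in S_{G_1}} \C^2\bigr) \otimes \bigl(\bigotimes_{(i,j) \in S_{G_2}} \C^2\bigr)$.

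Next I would fix the auxiliary data compatibly with the splitting: choose the orientation $\Gamma = \Gamma_1 \sqcup \Gamma_2$, so that $\Delta_\Gamma = \Delta_{\Gamma_1} \sqcup \Delta_{\Gamma_2}$, and enumerate the edges of $G$ so that those of $E_1$ precede those of $E_2$; this is harmless since the paper already establishes that $\mathcal{A}_G$ is independent of the choice of $\Gamma$ and of the edge ordering. Because each $\psi_{ij}$ with $(i,j) \in S_{G_k}$ depends only on $\mathbf{x}|_{V_k}$, both the numerator $\otimes_{(i,j)\in S_G}\psi_{ij}$ and the denominator $\prod_{(i,j)\in\Delta_\Gamma}\omega(\psi_{ij},\psi_{ji})$ in \eqref{def-T} split across the tensor factorization, so $T_\Gamma(\mathbf{x}) = T_{\Gamma_1}(\mathbf{x}|_{V_1}) \otimes T_{\Gamma_2}(\mathbf{x}|_{V_2})$.

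Then I would check that the symmetrizer and the contractions respect the splitting. A permutation of $S_G$ preserving the source must preserve the partition $S_G = S_{G_1} \sqcup S_{G_2}$, since the sources occurring in $S_{G_1}$ lie in $V_1$ and those in $S_{G_2}$ lie in $V_2$, and $V_1 \cap V_2 = \emptyset$; hence $P_G \cong P_{G_1} \times P_{G_2}$ acting factorwise. As $(T_1 \otimes T_2).(\sigma_1,\sigma_2) = (T_1.\sigma_1) \otimes (T_2.\sigma_2)$ on decomposable tensors and this extends by linearity, one gets $(T_1 \otimes T_2).p_G = (T_1.p_{G_1}) \otimes (T_2.p_{G_2})$, and applying this to $T_\Gamma(\mathbf{x})$ gives $T_\Gamma(\mathbf{x}).p_G = \bigl(T_{\Gamma_1}(\mathbf{x}|_{V_1}).p_{G_1}\bigr) \otimes \bigl(T_{\Gamma_2}(\mathbf{x}|_{V_2}).p_{G_2}\bigr)$. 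Finally, for $e \in E_1$ the contraction $\omega_e$ acts only on the first tensor factor and for $e \in E_2$ only on the second; with the edges ordered as above, $\omega_{e_1}\circ\cdots\circ\omega_{e_m}$ first contracts the first factor over all of $E_1$, producing by \eqref{def-amp} the scalar $\mathcal{A}_{G_1}(\mathbf{x}|_{V_1})$, and then contracts the second factor over all of $E_2$, producing $\mathcal{A}_{G_2}(\mathbf{x}|_{V_2})$; the product of these two scalars is $\mathcal{A}_G(\mathbf{x})$, which is the assertion.

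I do not expect a genuine obstacle here: the content is bookkeeping about how the tensor factorization interacts with the group actions and the contractions. The one point deserving care is the interaction with signs — namely that a permutation of $S_G$ acting as $\tau_1 \in Q_{G_1}$ on $S_{G_1}$ and $\tau_2 \in Q_{G_2}$ on $S_{G_2}$ has sign $\sgn(\tau_1)\sgn(\tau_2)$, the standard fact that the sign of a block-diagonal permutation is the product of the signs of the blocks. This guarantees that $q_G$ (hence $p_G q_G$) also factors, should one prefer to run the argument through the $T_\Gamma(\mathbf{x}).(p_G q_G)$ description of $\mathcal{A}_G$ instead of the contraction description used above.
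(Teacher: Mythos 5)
Your proposal is correct. The paper in fact states Proposition \ref{multiplicative} without proof, and your argument supplies exactly the bookkeeping one would expect: the splittings $S_G = S_{G_1} \sqcup S_{G_2}$, $T_\Gamma = T_{\Gamma_1} \otimes T_{\Gamma_2}$, $P_G \cong P_{G_1} \times P_{G_2}$ (forced by source-preservation together with $V_1 \cap V_2 = \emptyset$), and the factorwise action of the contractions $\omega_e$, combined with the already-established independence of $\mathcal{A}_G$ from the choice of $\Gamma$ and of the edge enumeration; your closing remark on the multiplicativity of $\sgn$ across the blocks of $Q_G$ correctly handles the only sign issue in the alternative $p_G q_G$ formulation.
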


\section{Conjectures} \label{conjectures}

In this section, we assume that $G$ is a non-oriented finite simple graph.

\begin{conjecture} The function $\mathcal{A}_G$ is nowhere vanishing on $C_G(\R^3)$. \end{conjecture}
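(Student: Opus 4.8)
The plan is to reduce to connected graphs and then induct on the number of edges. By Proposition~\ref{multiplicative}, $\mathcal{A}_G$ is the product of the $G$-amplitudes of the connected components of $G$, so it suffices to treat connected $G$, re-applying Proposition~\ref{multiplicative} whenever deleting an edge disconnects the graph. The base case is a single edge: for $G=K_2$ the group $P_G$ is trivial, $T_\Gamma(\mathbf{x})=\psi_{12}\otimes\psi_{21}/\omega(\psi_{12},\psi_{21})$, and the unique contraction $\omega_{e_1}$ gives $\mathcal{A}_{K_2}(\mathbf{x})=\omega(\psi_{12},\psi_{21})/\omega(\psi_{12},\psi_{21})=1$ identically; hence $\mathcal{A}_G\equiv 1$ whenever $G$ is a disjoint union of edges. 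I would then try to push this through by controlling how the amplitude changes as edges are added.

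For the inductive step it is convenient to use the first of the two equivalent descriptions of $\mathcal{A}_G$. The group $P_G$ is the direct product, over the vertices $v_i$, of the symmetric groups on the half-edge sets $\{(i,j):v_j\text{ a }G\text{-neighbor of }v_i\}$; hence $T_\Gamma(\mathbf{x}).p_G$ equals, up to the nonzero scalar $\prod_i(\deg(v_i))!\big/\prod_{(i,j)\in\Delta_\Gamma}\omega(\psi_{ij},\psi_{ji})$, the tensor product over vertices of the symmetric tensors $\bigvee_{v_j\sim v_i}\psi_{ij}$. Contracting, along each edge $\{v_i,v_j\}$, one slot of the tensor at $v_i$ with one slot of the tensor at $v_j$ via $\omega$, and expanding the symmetrizations, one writes $\mathcal{A}_G(\mathbf{x})$ as a sum over ``slot matchings'' of products of factors $\omega(\psi_{ij},\psi_{kl})$, divided by $\prod_{(i,j)\in\Delta_\Gamma}\omega(\psi_{ij},\psi_{ji})$; the matching pairing slot $(i,j)$ with slot $(j,i)$ along every edge contributes exactly $1$ after this division, and everything else is a correction. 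The tools for bounding the corrections are: first, $|\omega(\psi_{ij},\psi_{ji})|=1$ since $\psi_{ij}\perp\psi_{ji}$, whereas for unit vectors in general $|\omega(\psi,\phi)|^2=1-|\langle\psi,\phi\rangle|^2=\tfrac12(1-v\cdot w)$ where $v,w\in S^2$ are the directions represented by $\psi,\phi$, so that $|\omega(\psi_{ij},\psi_{kl})|\le 1$ with equality only when $v_{ij}=-v_{kl}$; and second, the Pl\"ucker relation $\omega(a,b)\omega(c,d)-\omega(a,c)\omega(b,d)+\omega(a,d)\omega(b,c)=0$ in $\Lambda^2\C^2$, which lets one trade corrections against one another. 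For the path $P_3$ this already suffices: one finds $\mathcal{A}_{P_3}(\mathbf{x})=1+\omega(\psi_{12},\psi_{23})\omega(\psi_{21},\psi_{32})\big/\big(\omega(\psi_{12},\psi_{21})\omega(\psi_{23},\psi_{32})\big)$, and the second summand has modulus at most $1$ and, by the equality analysis above, equals $+1$ (never $-1$) precisely on the collinear locus $v_{23}=-v_{12}$, so $\mathcal{A}_{P_3}$ never vanishes. I expect this bookkeeping to close for forests, and with more effort for graphs of small first Betti number.

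The hard part is that the complete graph is the genuinely difficult case and is out of reach: taking $G=K_n$ recovers the normalized Atiyah determinant, $\mathcal{A}_{K_n}=D$, so Conjecture~A for $K_n$ is exactly Atiyah's Conjecture~1, open for $n\ge 5$; on complete graphs the two conjectures coincide, so no method settles Conjecture~A in full without settling Conjecture~1, and the new, plausibly tractable content concerns non-complete (sparse) graphs. Even for those, two difficulties persist: (i) keeping the number and size of the correction terms under control as the edge count grows --- a naive deletion--contraction recursion between $\mathcal{A}_G$ and $\mathcal{A}_{G-e}$ is not monotone, since deleting an edge lowers two vertex degrees and reshuffles every vertex symmetrization; and (ii) handling configurations near the degenerate loci (collinear sub-configurations, coincident pairwise directions), where many factors $\omega(\psi_{ij},\psi_{kl})$ attain modulus $1$ at once, so the corrections are no longer small and one must instead show that their \emph{signed} sum cannot cancel the leading $1$, exactly as in the $P_3$ computation. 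A realistic outcome, short of progress on Conjecture~1, is a proof of Conjecture~A for forests, unicyclic graphs, graphs all of whose $2$-connected blocks are small, and for planar configurations, obtained by combining Proposition~\ref{multiplicative}, the Pl\"ucker bookkeeping, and the known $n\le 4$ and planar $n=5$ cases of Conjectures~1 and~2.
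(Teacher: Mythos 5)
The statement you are addressing is presented in the paper as an open conjecture: the paper offers no proof of Conjecture A, only numerical evidence, the reduction (for trees) to the matrix-analysis Conjecture D, and the unconditional star-graph case obtained from Marcus's permanent inequality. Your proposal is therefore correctly framed as a research program rather than a proof, and the pieces of it that can be checked do check out: $\mathcal{A}_{K_2}\equiv 1$, the identity $|\omega(\psi,\phi)|^2=1-|\langle\psi,\phi\rangle|^2=\tfrac12(1-v\cdot w)$ for unit spinors, the two-term expansion of $\mathcal{A}_{P_3}$ and the equality analysis showing the correction term equals $+1$ exactly on the backtracking-collinear locus. You also correctly identify that the complete-graph case of Conjecture A is precisely Atiyah's Conjecture 1 (note, though, that the paper's normalization gives $\mathcal{A}_{K_n}=a_nD_n$ with $a_n=\prod_{k=1}^{n-1}(k!)^2$, not $\mathcal{A}_{K_n}=D$; this does not affect the nonvanishing statement).

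The genuine gap is that nothing beyond disjoint unions of edges and $P_3$ is actually established, and $P_3$ is already a star graph, hence already covered by the paper's corollary to Marcus's inequality. Your inductive step is only a hope: the expansion of $\mathcal{A}_G$ has $\prod_i\deg(v_i)!-1$ correction terms each merely bounded in modulus by $1$, so the triangle inequality alone cannot isolate the leading $1$ for any vertex of degree $\geq 2$ beyond the $P_3$ case, and you do not exhibit the Pl\"ucker cancellations that would have to replace it; ``I expect this bookkeeping to close for forests'' is exactly the step that is missing. If your goal is the tree/forest case, the paper's own route is worth comparing: for a tree, the alternating orientation by levels turns $\mathcal{A}_T(\mathbf{x})$ into $\overline{f_{\sim_1,\sim_2}(A)}$ for a Gram matrix $A$ of the spinors $\psi_{e}$, reducing Conjecture C (hence A and B for trees) to a permanent-type inequality for positive semidefinite matrices (Conjecture D), of which Marcus's theorem is the special case giving star graphs. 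That reformulation packages all of your ``slot matchings'' into a single positivity statement about $A$, which seems a more promising target than term-by-term estimates; either way, a complete proof of Conjecture A in general would subsume Conjecture 1 and is not currently available.
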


\begin{definition} Given a non-oriented finite simple graph $G$, we define
\begin{equation} c_G = \inf \{ |\mathcal{A}_G(\mathbf{x})| ;\, \mathbf{x} \in C_G(\R^3) \} \end{equation}
\end{definition}

We conjecture

\begin{conjecture} $c_G \geq 1$.
\end{conjecture}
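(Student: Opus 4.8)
The plan is to reduce the general case to the Atiyah–Sutcliffe Conjecture 2 by a combination of the multiplicative property (Proposition~\ref{multiplicative}) and a monotonicity principle under edge deletion. First I would try to prove the \emph{edge-monotonicity statement}: if $G' = (V, E')$ is obtained from $G = (V, E)$ by deleting a single edge $e = \{v_i, v_j\}$, then $|\mathcal{A}_G(\mathbf{x})| \geq |\mathcal{A}_{G'}(\mathbf{x})|$ for every $\mathbf{x} \in C_G(\R^3) \subseteq C_{G'}(\R^3)$, or at least $c_G \geq c_{G'}$. If this holds, then since every graph $G$ on $n$ vertices with no isolated vertices sits between some disjoint union of edges and the complete graph $K_n$ (on its connected components), one would get $c_G \geq c_{K_2}\cdots$; more precisely, iterating deletion down to a spanning structure and then using Proposition~\ref{multiplicative} on components, the bound $c_G \geq 1$ would follow from the $K_n$ case together with $c_{K_2} = 1$ (a direct computation: for a single edge, $\mathcal{A}_{K_2}(\mathbf{x}) = \omega(\psi_{ij}, \psi_{ji})/\omega(\psi_{ij}, \psi_{ji}) = 1$, so $c_{K_2} = 1$, which also forces any sensible monotonicity normalization).

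Concretely, the key steps in order would be: (1) verify $c_{K_2} = 1$ and, more generally, $c_{G} = 1$ when $G$ is a disjoint union of single edges, using Proposition~\ref{multiplicative}; (2) establish that $\mathcal{A}_G$ restricted to the diagonal-type degenerations (where a new pair of points is brought together) relates $\mathcal{A}_G$ to $\mathcal{A}_{G'}$ via a limiting argument — i.e., understand the behavior of $\psi_{ij}$ and the symmetrizer $p_G q_G$ as $\mathbf{x}_i \to \mathbf{x}_j$ along an edge \emph{not} in $E'$ but handled carefully so points stay distinct along edges of $E'$; (3) alternatively, and more promisingly, prove edge-monotonicity by an algebraic identity expressing $\mathcal{A}_G$ as a sum over "states" in which $\mathcal{A}_{G'}$ appears as one distinguished term with the remaining terms controlled in sign or magnitude via the Hermitian positivity of the $\psi_{ij}$ construction (each $\psi_{ij}\psi_{ij}^* = \tfrac12(1 + v_{ij}\cdot\vec\sigma) \geq 0$); (4) invoke the known $n \leq 4$ cases of Conjecture 2 to get unconditional results for graphs on $\leq 4$ vertices, and state the reduction to Conjecture 2 in general.

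The main obstacle — and I expect it to be genuinely hard — is step (3)/(2): proving any form of monotonicity of $|\mathcal{A}_G|$ under edge deletion. Unlike the determinant in the original problem, $\mathcal{A}_G$ is a contraction of a symmetrized tensor, and deleting an edge removes two tensor factors $\psi_{ij}, \psi_{ji}$ \emph{and} changes the groups $P_G$ and $Q_G$, so there is no obvious term-by-term domination. One natural attempt is to write $\mathcal{A}_G$ as an inner product $\langle \Psi_G, \Psi_G'\rangle$ of two explicit vectors built from the $\psi_{ij}$ (as is done for the Atiyah determinant, where $D$ is a Hermitian-type pairing), and then use Cauchy–Schwarz together with a normalization lemma showing each "half" has norm $\geq 1$; but getting the norm bound on each half seems to again encode the full strength of Conjecture 2. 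A weaker but still valuable fallback would be to prove Conjecture~B for specific tractable families — trees, cycles, and complete bipartite graphs — where the symmetrizer simplifies enough to make the contraction explicit, and to prove the non-vanishing Conjecture~A in general by a degree/continuity argument (showing $\mathcal{A}_G$ has constant nonzero value $1$ on a large connected stratum of collinear configurations and cannot vanish by the Hermitian positivity of the building blocks). I would present the reduction to Conjecture 2 as the main structural result, flag the monotonicity step as the crux, and record the unconditional $n \leq 4$ consequences.
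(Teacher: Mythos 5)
The statement you are addressing is Conjecture~B, which the paper does not prove: it is left open, supported only by numerical evidence and by partial results (the tree case conditionally on the matrix-analysis Conjecture~D, and the star-graph case unconditionally via Marcus's permanent analogue of the Hadamard inequality). So there is no ``paper proof'' to match your argument against; the only question is whether your proposal closes the gap, and it does not. Your entire strategy rests on the edge-deletion monotonicity statement $c_G \geq c_{G'}$ (or $|\mathcal{A}_G(\mathbf{x})| \geq |\mathcal{A}_{G'}(\mathbf{x})|$), which you explicitly leave unproved and flag as ``genuinely hard.'' Worse, that single step is at least as strong as the conjecture itself: since the empty graph (or a disjoint union of single edges) has amplitude $1$, monotonicity under deleting edges one at a time would immediately yield $c_G \geq 1$ for \emph{every} $G$, including $G = K_n$, i.e.\ it would prove the full Atiyah--Sutcliffe Conjecture~2, open for $n \geq 5$. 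So you have not reduced Conjecture~B to something known; you have replaced it by a strictly stronger open statement. Relatedly, your step (4) has the monotonicity direction backwards: if deleting edges decreases $|\mathcal{A}|$, then $K_n$ sits at the \emph{top} of the partial order, so the known $n \leq 4$ cases of Conjecture~2 give upper-bound information about subgraphs of $K_4$ only in the wrong direction and cannot be ``invoked'' to bound a general $c_G$ from below.

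Two further concrete problems. First, your degeneration argument in step (2) cannot work as stated: as $\mathbf{x}_i \to \mathbf{x}_j$ the unit vector $v_{ij}$ has no limit (it depends on the direction of approach), so $\mathcal{A}_G$ does not degenerate to $\mathcal{A}_{G'}$ in any straightforward limiting sense; moreover deleting an edge changes the symmetrizers $P_G$ and $Q_G$ globally, not just the two tensor slots $\psi_{ij}, \psi_{ji}$, so there is no term-by-term comparison between the two contractions. Second, your fallback of proving the conjecture for trees is exactly where the paper's own partial progress lies, but the paper only achieves it conditionally: it rewrites $\mathcal{A}_T(\mathbf{x})$ as $\overline{f_{\sim_1,\sim_2}(A)}$ for a Gram matrix $A$ of the spinors $\psi_{e}$ and two equivalence relations on the edges, and then needs the unproven Conjecture~D (a double-symmetrized positivity inequality generalizing Marcus's permanent inequality) to conclude; only the star-graph case, where Conjecture~D reduces to Marcus's theorem, is settled. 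If you want to make unconditional progress, the tractable targets are (i) the $K_2$ and disjoint-union cases you correctly computed via Proposition~\ref{multiplicative}, (ii) star graphs via Marcus, and (iii) graphs on at most $4$ vertices that happen to be complete, via the known cases of Conjecture~2 --- but none of these combine, by any argument you have supplied, into a proof of $c_G \geq 1$ in general.
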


We remark that Conjecture B implies Conjecture A. In the case where $G = T$ is a tree, we conjecture the following.

\begin{conjecture} If $T$ is a tree (a connected circuit-free non-oriented finite simple graph), then
\[ \Real(\mathcal{A}_T(\mathbf{x})) \geq 1, \]
for any $\mathbf{x} \in C_T(\R^3)$. \label{main-theorem}
\end{conjecture}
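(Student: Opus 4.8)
The plan is to make the definition \eqref{def-amp} explicit when $G=T$ is a tree and then attack the resulting finite sum by a rooted recursion. Since $T$ is a tree, $S_T$ is the disjoint union of the two-element fibres $\{(i,j),(j,i)\}$ of its edges, the group $P_T$ is the direct product $\prod_{v\in V}S_{N(v)}$ permuting the outgoing darts at each vertex, and the operators $\omega_{e_1},\dots,\omega_{e_m}$ in \eqref{def-amp} contract pairwise disjoint pairs of tensor slots. Normalizing $T_\Gamma$ as in \eqref{def-T} so that $\omega(\psi_{ij},\psi_{ji})=1$ along the preferred orientation, one gets
\[
\mathcal{A}_T(\mathbf{x})=\sum_{\sigma=(\sigma_v)_v\in\prod_v S_{N(v)}}\ \prod_{\{i,j\}\in E}\omega\bigl(\psi_{i,\sigma_i(j)},\,\psi_{j,\sigma_j(i)}\bigr),
\]
each pair ordered according to $\Gamma$, with the term $\sigma=\mathrm{id}$ contributing exactly $1$. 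Equivalently — and this is the form I would compute with — root $T$ at a leaf $r$, write $p(v)$ for the parent, and define spinors $\Phi_v\in\C^2$ by downward recursion: $\Phi_v=\psi_{v,p(v)}$ when $v$ is a leaf, and $\Phi_v=\sum_{u\in N(v)}\psi_{v,u}\,\operatorname{perm}\!\bigl(M^{(u)}_v\bigr)$ otherwise, where $M^{(u)}_v$ has rows indexed by the children $d$ of $v$, columns by $N(v)\setminus\{u\}$, and entry $\omega(\psi_{v,w},\Phi_d)$ in position $(d,w)$. Unwinding the contractions yields $\mathcal{A}_T(\mathbf{x})=\langle\Phi_c,\psi_{c,r}\rangle$, the component of $\Phi_c$ along $\psi_{c,r}$, where $c$ is the unique neighbour of $r$.

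Write $\lambda_v:=\langle\Phi_v,\psi_{v,p(v)}\rangle$, so $\mathcal{A}_T=\lambda_c$. Because $\psi_{v,p(v)}$ and $\psi_{p(v),v}$ are eigenvectors of a common hermitian involution for the eigenvalues $+1$ and $-1$, hence orthogonal, the normalized spinor $\widehat\Phi_v:=\Phi_v/\lambda_v$ (when $\lambda_v\neq0$) has the shape $\widehat\Phi_v=\psi_{v,p(v)}+\epsilon_v\,\psi_{p(v),v}$ for a single $\epsilon_v\in\C$ with $|\epsilon_v|\le1$. Substituting this back, each $M^{(u)}_v$ becomes $\operatorname{diag}_d(\lambda_d)$ times a submatrix of $\overline{G^{(v)}}-D_v\,\Omega^{(v)}$, where $G^{(v)}$ is the Gram matrix of $\{\psi_{v,u}:u\in N(v)\}$, $\Omega^{(v)}$ is its skew-symmetric companion (so $|\Omega^{(v)}_{uw}|^2=1-|G^{(v)}_{uw}|^2$), and $D_v=\operatorname{diag}_d(\epsilon_d)$ runs over the children (zero in the parent slot); collecting terms gives the identity
\[
\lambda_v=\Bigl(\prod_{d\ \text{child of}\ v}\lambda_d\Bigr)\,\operatorname{perm}\!\bigl(\overline{G^{(v)}}-D_v\,\Omega^{(v)}\bigr),
\]
together with an explicit formula for $\epsilon_v$ in terms of the $(\lambda_d,\epsilon_d)$. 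When $v$ is adjacent only to leaves one has $D_v=0$ and $\lambda_v=\operatorname{perm}(G^{(v)})$, the permanent of a correlation matrix, which is automatically real and at least $1$; in particular $\mathcal{A}_{K_{1,k}}$ is itself the permanent of such a matrix (the Gram matrix of the spinors from the centre to the leaves), hence $\ge1$, so Conjecture~\ref{main-theorem} holds for stars, and — with the elementary bound below — for all trees on at most four vertices, consistent with the known low-$n$ results for the complete graph.

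The induction is on the height of $v$ in the rooted tree, and the aim is to propagate two bounds: that $|\epsilon_v|$ stays below a universal constant such as $\tfrac1{2\sqrt2}$ (the value attained already on $P_3$), so $\overline{G^{(v)}}-D_v\Omega^{(v)}$ remains a controlled perturbation of a correlation matrix; and that the complex numbers $\lambda_v$ stay clustered near positive reals tightly enough — say $\Real\lambda_v\ge1$ together with a matching estimate on $|\Imag\lambda_v|$ — that the product over the children, multiplied by the (near-$1$, real-leading) permanent factor, cannot drag $\Real\lambda_v$ below $1$. The engine at a single vertex is the Lagrange-type identity $|\omega(\phi,\chi)|^2+|\langle\phi,\chi\rangle|^2=\norm{\phi}^2\,\norm{\chi}^2$ (so $|\omega(\phi,\chi)|\le\norm{\phi}\,\norm{\chi}$ and $|G^{(v)}_{uw}|^2+|\Omega^{(v)}_{uw}|^2=1$), which both bounds the off-diagonal corrections and, against the correlation-permanent inequality, keeps the diagonal part in charge; for instance it already yields $\Real\mathcal{A}_{P_4}(\mathbf{x})\ge1+\tfrac12(|u|^2+|w|^2)$ on the path $P_4$, for suitable $u,w\in\C$ with $|u|,|w|\le1$.

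The hard part is this last estimate, and in particular its robustness. A crude triangle-inequality bound on the off-diagonal terms of $\operatorname{perm}(\overline{G^{(v)}}-D_v\Omega^{(v)})$ is \emph{not} enough: it degenerates exactly when some $G^{(v)}_{uw}$ is small, i.e. when two directions at $v$ are nearly antipodal, so the argument must exploit the precise algebraic ties between $\Omega^{(v)}$, $G^{(v)}$ and the $\epsilon_d$ — in effect a sharp ``perturbed permanent positivity'' for matrices $\overline G-D\Omega$ with $G$ a correlation matrix, $\Omega$ its skew companion and $\|D\|$ suitably small — and moreover this must be quantitatively strong enough that the arguments of the $\lambda_v$ do not accumulate uncontrollably as one ascends the tree. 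I would first pin down the sharp form of the lemma for $2\times2$ and $3\times3$ blocks (which settles all trees of maximum degree $\le3$), then try to bootstrap to arbitrary size via a further Laplace-type expansion stripping off the $\epsilon$-perturbation one child at a time, reducing at each stage to the unperturbed correlation-matrix case.
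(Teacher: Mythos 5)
First, a point of framing: the statement you are proving is Conjecture~C of the paper, and the paper does \emph{not} prove it. It only (i) reduces it to another open conjecture in matrix analysis (Conjecture~D, the inequality $\Real f_{\sim_1,\sim_2}(A)\ge|P_1\cap P_2|\prod_i A_{ii}$ for Hermitian positive semidefinite $A$, obtained by packaging \emph{all} the edge spinors of the tree into a single Gram matrix via the identity $\omega(\psi,\tilde{j}\phi)=\langle\phi,\psi\rangle$), and (ii) settles the star case, where that reduction lands exactly on Marcus's permanent inequality. Your explicit expansion $\mathcal{A}_T(\mathbf{x})=\sum_{\sigma\in\prod_v S_{N(v)}}\prod_{\{i,j\}\in E}\omega(\psi_{i,\sigma_i(j)},\psi_{j,\sigma_j(i)})$ (with the normalization $\omega(\psi_{ij},\psi_{ji})=1$) is correct and is essentially the same starting point as the paper's; your treatment of stars, identifying $\mathcal{A}_{K_{1,k}}$ with the permanent of a correlation matrix, reproduces the paper's Corollary. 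Where you diverge is in replacing the paper's single global Gram-matrix reformulation by a rooted, vertex-by-vertex recursion with running data $(\lambda_v,\epsilon_v)$. That is a legitimate alternative decomposition, and it has the virtue of isolating a purely local ``perturbed permanent positivity'' statement; but it also introduces a danger the global formulation avoids, namely that the phases of the $\lambda_v$ could accumulate along a long path, and nothing in your setup rules that out.

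The genuine gap is that the two induction hypotheses you need are asserted, not proved, and you say so yourself. Concretely: (a) the bound $|\epsilon_v|\le1$, i.e. $|\langle\Phi_v,\psi_{p(v),v}\rangle|\le|\langle\Phi_v,\psi_{v,p(v)}\rangle|$, is not automatic from the definition of $\widehat\Phi_v$ and is already a nontrivial claim one level above the leaves, let alone the sharper $|\epsilon_v|\le\tfrac1{2\sqrt2}$ you want to propagate; (b) the core lemma --- that $\Real\operatorname{perm}(\overline{G}-D\Omega)$ stays $\ge1$ with a matching control on the imaginary part, for $G$ a correlation matrix, $\Omega$ its skew companion and $D$ small --- is precisely the hard content of the conjecture in local form, and you give no proof of it even for $2\times2$ blocks (the $P_4$ inequality you quote is itself unverified in the write-up). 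Since even $\Real\lambda_v\ge1$ for each $v$ would not by itself prevent the product over children times a complex permanent factor from having real part below $1$ without the quantitative phase control, the argument as it stands does not close. What you have is a plausible reduction of Conjecture~C to a family of local matrix inequalities, parallel in spirit to (and no easier than) the paper's reduction to Conjecture~D, not a proof.
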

If true, Conjecture C implies that Conjecture B (and hence also conjecture A)  holds for trees.

\begin{figure}
\begin{center}
\begin{tikzpicture}[scale=1]
  \def\n{5}      
  \def\r{2.2}    

  \readlist\vertexcolors{red!40, blue!40, green!40, orange!40, purple!40, cyan!40, yellow!60}
  \readlist\edgecolors{red!70!black, blue!70!black, green!70!black, orange!70!black, purple!70!black, cyan!70!black}

  \def\edges{1/2, 1/3, 1/4, 1/5, 2/3, 2/4, 2/5, 3/4, 3/5, 4/5}

 \foreach \i in {1,...,\n}{
    \pgfmathtruncatemacro{\cindex}{mod(\i-1,\vertexcolorslen)+1}
    \edef\thisfill{\vertexcolors[\cindex]}
    \node[vertex, fill=\thisfill] (v\i) at ({360/\n*(\i-1)}:\r) {$v_{\i}$};
  }

  \foreach [count=\ei] \u/\v in \edges{
    \pgfmathtruncatemacro{\ecindex}{mod(\ei-1,\edgecolorslen)+1}
    \edef\thisdraw{\edgecolors[\ecindex]}
    \draw[edge, draw=\thisdraw] (v\u) -- (v\v);
  }

\end{tikzpicture}
\caption{The complete graph $K_5$ with $5$ vertices.}
\label{fig:complete-graph}
\end{center}
\end{figure}
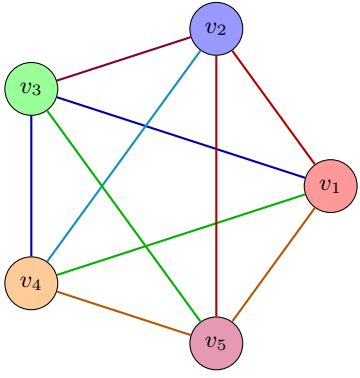

We now note the link with the Atiyah problem on configurations of points. If $G = K_n$ is the complete graph with $n$ vertices (see Fig. \ref{fig:complete-graph}), it can be shown that $\mathcal{A}_{K_n}$ has the same symmetry properties as the normalized Atiyah determinant function $D_n$, so that
\[ \mathcal{A}_{K_n} = a_n D_n, \]
where $a_n$ is a constant depending on $n$ alone. If $\mathbf{x} \in C_{K_n}(\R^3)$ is a collinear configuration (all the points in the configuration lie on a single line in $\R^3$), then it is known that $D$ takes the value $1$ in this case. Working out what $\mathcal{A}_{K_n}$ is equal to, in this case, gives
\[ a_n = \prod_{k = 1}^{n - 1} (k!)^2. \]

Hence the \AS Conjecture 2 is equivalent in our language to the conjecture that
\[ c_{K_n} = a_n, \]
which, at the time of writing, is known only for $n \leq 4$.

Indeed,  this was  the main motivation for our construction: to generalize the Atiyah problem on configurations of points, so that it hopefully becomes in its proper setting, thus paving the way for an eventual solution.

\section{A conjecture in matrix analysis}

In this section, we state a conjecture in matrix analysis, which implies Conjecture C. Given an equivalence relation, say $\sim$, on $[n]$, we let $P_\sim$ be the group of all permutations of $[n]$ which preserve the equivalence classes of $[n]$ under $\sim$. Let $\sim_i$, for $i = 1, 2$, be two equivalence relations on $[n] = \{1, \dots, n\}$, with associated permutation groups $P_1$, $P_2$, respectively (we write $P_i$ rather than $P_{\sim_i}$ to alleviate the notation).

Let $A$ be an hermitian positive semidefinite $n \times n$ matrix. We define
\[ f_{\sim_1, \sim_2}(A) = \sum_{\sigma_1 \in P_1} \sum_{\sigma_2 \in P_2} \prod_{i = 1}^n A_{\sigma_1(i), \sigma_2(i)}. \]

\begin{conjecture} Under the above hypotheses, we have
\[ \Real( f_{\sim_1, \sim_2}(A) ) \geq |P_1 \cap P_2| \, \prod_{i = 1}^n A_{i i}. \]
\end{conjecture}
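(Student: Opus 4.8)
The plan is to recast the statement as a positivity property of an element of the group algebra $\mathbb{Z}[S_n]$ and then to resolve that element into manifestly non-negative pieces. Since $A\succeq 0$, write $A_{jk}=\langle b_j,b_k\rangle$ for vectors $b_1,\dots,b_n$ (the columns of a square root of $A$), put $T=b_1\otimes\cdots\otimes b_n$, and for $\sigma\in S_n$ let $R_\sigma$ be the operator on $(\C^n)^{\otimes n}$ permuting tensor factors by $\sigma$. Using only that $P_1,P_2$ are groups, a short computation gives $f_{\sim_1,\sim_2}(A)=\langle \mathbf{p}_2\mathbf{p}_1 T,T\rangle$ with $\mathbf{p}_i=\sum_{\sigma\in P_i}R_\sigma$, hence
\[ \Real f_{\sim_1,\sim_2}(A)=\tfrac12\big\langle(\mathbf{p}_1\mathbf{p}_2+\mathbf{p}_2\mathbf{p}_1)T,T\big\rangle,\qquad \prod_i A_{ii}=\norm{T}^2 . \]
So the conjecture is equivalent to saying that the self-adjoint element $M:=\mathbf{p}_1\mathbf{p}_2+\mathbf{p}_2\mathbf{p}_1-2|P_1\cap P_2|\,\mathrm{Id}$ of $\mathbb{Z}[S_n]$ is \emph{positive on product tensors} (a statement independent of the ambient dimension). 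Writing $\mathbf{p}_1\mathbf{p}_2+\mathbf{p}_2\mathbf{p}_1=\sum_{g\in S_n}e_g R_g$, one checks $e_g=e_{g^{-1}}\ge 0$, $e_e=2|P_1\cap P_2|$, and $e_g\le 2|P_1\cap P_2|$ for all $g$ (because $P_1\cap gP_2$ lies in a coset of $P_1\cap P_2$), so the identity already carries the maximal weight. Two base cases are free: if $\sim_1$ or $\sim_2$ is the discrete partition (or the one-block partition) then $f$ is a product of permanents of principal submatrices of $A$, and the statement reduces to the classical inequality $\operatorname{perm}(H)\ge\prod_i H_{ii}$ for $H\succeq 0$.

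For the general case I would pair each $g$ with $g^{-1}$ in $\langle MT,T\rangle=\sum_g e_g\langle R_gT,T\rangle$. The identity term contributes exactly $2|P_1\cap P_2|\norm{T}^2$, cancelling the subtracted constant; each involution $g\neq e$ contributes $e_g\prod_{\text{fixed }p}A_{pp}\cdot\prod_{\text{transpositions }\{i,j\}\text{ of }g}|A_{ij}|^2\ge 0$; and each pair $\{g,g^{-1}\}$ with $g$ not an involution contributes $2e_g\Real\langle R_gT,T\rangle$, where $\langle R_gT,T\rangle=\prod_{\text{cycles }(i_1\cdots i_k)\text{ of }g}\langle b_{i_2},b_{i_1}\rangle\langle b_{i_3},b_{i_2}\rangle\cdots\langle b_{i_1},b_{i_k}\rangle$. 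By Cauchy--Schwarz along each cycle $|\langle R_gT,T\rangle|\le\norm{T}^2$, but this crude bound is hopeless on its own; the task is to absorb the \emph{negative} part of each long-cycle term. The plan is: (ii) bound $\Real\langle R_gT,T\rangle$ below by an explicit negative combination of the quantities $\norm{T}^2$ and $|A_{ij}|^2\prod_{p\neq i,j}\norm{b_p}^2$ attached to the edges $\{i,j\}$ of the cycles of $g$, via iterated Cauchy--Schwarz/AM--GM along those cycles; and (iii) prove a combinatorial ``diagonal/edge dominance'' lemma for $\mathbf{p}_1\mathbf{p}_2+\mathbf{p}_2\mathbf{p}_1$: the multiplicities $e_g$ of the long cycles and the edges they use are constrained enough that the positive budget (weight $2|P_1\cap P_2|$ on the identity, weights $e_g$ on the involutions) pays for all negative contributions simultaneously. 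A natural, genuinely different route to try is compactness: $\Real f/\prod_i A_{ii}$ is scale invariant, so its infimum over unit-diagonal $A\succeq0$ is attained, and a first-order analysis at a minimiser might force a rigid structure (low rank, or real entries after conjugating $A$ by a diagonal unitary), reducing to the base cases or to strictly smaller instances.

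I expect step (iii) — equivalently, controlling the mutual interference of the long-cycle terms — to be the main obstacle. Note that the two ``obvious'' simplifications do \emph{not} help: reducing $|P_1\cap P_2|$ to $1$ changes the vectors $b_i$ and their number, and a common block-decomposition of $(\sim_1,\sim_2)$ does not descend to the conjecture because $\Real(z_1z_2)\ge r_1r_2$ does not follow from $\Real z_i\ge r_i>0$. Moreover the case $|P_1\cap P_2|=1$ with $[n]$ replaced by the arc set of $K_n$ and $(\sim_1,\sim_2)$ the ``common source / common edge'' partitions is precisely the \AS Conjecture~2, so any argument uniform in $(\sim_1,\sim_2)$ would in particular settle that open problem; a clean certificate — $M$ written as $\sum_\alpha X_\alpha^*X_\alpha$ with each $X_\alpha$ a product operator, or a direct integral representation of $\Real f$ as an expectation of a manifestly $\ge\prod_iA_{ii}$ quantity — would be the ideal outcome, and whether one exists is really the crux.
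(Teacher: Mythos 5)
The statement you are addressing is Conjecture~D of the paper: it is stated as an open conjecture and the paper contains no proof of it (the paper only proves the implication ``Conjecture D $\Rightarrow$ Conjecture C'' and observes that one degenerate choice of $(\sim_1,\sim_2)$ recovers Marcus's permanent inequality). So there is no proof to compare yours against, and what you have written is, by your own account, a program rather than a proof.

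The parts of your proposal that are actual claims check out. Writing $A_{jk}=\langle b_j,b_k\rangle$, the identities $\Real f_{\sim_1,\sim_2}(A)=\tfrac12\langle(\mathbf{p}_1\mathbf{p}_2+\mathbf{p}_2\mathbf{p}_1)T,T\rangle$ and $\prod_i A_{ii}=\norm{T}^2$ are correct (using that each $P_i$ is a group, so $\mathbf{p}_i$ is self-adjoint), as is the coefficient computation: the coefficient of $R_g$ in $\mathbf{p}_1\mathbf{p}_2$ is $|P_1\cap gP_2|$, which is $0$ or $|P_1\cap P_2|$ since a nonempty $P_1\cap gP_2$ is a coset of $P_1\cap P_2$; hence $e_g\le 2|P_1\cap P_2|=e_e$. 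The base cases via Marcus's inequality are also fine. But the entire mathematical content of the conjecture is concentrated in your step (iii) --- absorbing the negative real parts of the long-cycle terms $e_g\Real\langle R_gT,T\rangle$ into the diagonal and involution budget --- and for that step you offer no argument, only the (reasonable) hope that a sum-of-squares certificate $M=\sum_\alpha X_\alpha^*X_\alpha$ or a dominance lemma exists. No such certificate is exhibited, and the naive cycle-by-cycle Cauchy--Schwarz bound you mention is, as you say yourself, insufficient. So the proposal has a genuine, acknowledged gap and does not establish the statement.

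One side remark needs correcting: you assert that Atiyah--Sutcliffe Conjecture~2 is ``precisely'' the instance of Conjecture~D on the arc set of $K_n$. The paper's reduction of its amplitude to $f_{\sim_1,\sim_2}$ of a Gram matrix is carried out only for trees, and it depends essentially on the level-alternating orientation together with the identity $\omega(\psi,\tilde j\phi)=\langle\phi,\psi\rangle$; for $K_n$ ($n\ge 3$) the odd cycles obstruct this orientation trick, and the signed symmetrizer $q_G$ does not obviously produce an unsigned double sum over two permutation groups applied to a positive semidefinite matrix. Your weaker point stands, though: Conjecture~D already contains hard open special cases, so any argument uniform in $(\sim_1,\sim_2)$ would be a major result, which is further reason to treat the missing step (iii) as the crux rather than a technicality.
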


We remark that by taking one of the equivalence relation to be the discrete one and the other one to be the indiscrete one, one recovers Marcus's permanent analogue of the Hadamard inequality (\cite{Mar1963}).

\begin{theorem} Conjecture D implies Conjecture C. \end{theorem}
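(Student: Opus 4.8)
The plan is to prove, for every tree $T$, the a priori stronger statement that $\mathcal{A}_T(\mathbf{x})$ is \emph{exactly} a value $f_{\sim_1,\sim_2}(A)$ of the function from Conjecture D, where $A$ is a Hermitian positive semidefinite matrix all of whose diagonal entries equal $1$ and where the associated groups satisfy $|P_1\cap P_2|=1$. Once this is established, Conjecture D gives
\[ \Real\bigl(\mathcal{A}_T(\mathbf{x})\bigr)=\Real\bigl(f_{\sim_1,\sim_2}(A)\bigr)\ \ge\ |P_1\cap P_2|\prod_i A_{ii}\ =\ 1 \]
for every $\mathbf{x}\in C_T(\R^3)$, which is Conjecture C.

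To set this up I would first unwind $\mathcal{A}_T$ into an explicit scalar sum. Since a tree is bipartite, fix a $2$-colouring of $V$ into ``black'' and ``white'' vertices and choose $\Gamma$ to orient every edge from its black endpoint $b$ to its white endpoint $w$, so that $\Delta_\Gamma$ contains exactly one directed edge per edge and the normalising scalar carried by $T_\Gamma$ is $N:=\prod_{e=\{b,w\}\in E}\omega(\psi_{bw},\psi_{wb})$. Here $P_T=\prod_{v\in V}\Sym(N(v))$ (a permutation of $S_T$ preserving sources acts independently on the star of out-edges at each vertex, whose size is the degree), and for $\sigma\in P_T$ the tensor $T_\Gamma(\mathbf{x}).\sigma=N^{-1}\bigotimes_{(i,j)\in S_T}\psi_{\sigma(i,j)}$ is decomposable; contracting it by $\omega_{e_1}\circ\cdots\circ\omega_{e_m}$ therefore just pairs off disjoint factors and collapses to $N^{-1}\prod_{e=\{b,w\}}\omega(\psi_{\sigma(b,w)},\psi_{\sigma(w,b)})$ (the order of the $\omega_{e_k}$ is immaterial, as the paper notes). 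Summing over $\sigma$ and using linearity of contraction, $\mathcal{A}_T(\mathbf{x})=N^{-1}\sum_{\sigma\in P_T}\prod_{e=\{b,w\}\in E}\omega(\psi_{\sigma(b,w)},\psi_{\sigma(w,b)})$.

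Next I would reindex this sum over the edge set $E$. In a bipartite graph each edge has a unique black end, so the maps $(b,w)\mapsto\{b,w\}$ and $(w,b)\mapsto\{w,b\}$ identify the sets of black-source and of white-source directed edges each bijectively with $E$. Under these identifications the ``black part'' $\prod_{b\text{ black}}\Sym(N(b))$ of a $\sigma\in P_T$ becomes a permutation $\tau_1$ of $E$ preserving the equivalence relation $\sim_1$ on $E$ given by ``same black endpoint'', its ``white part'' becomes a permutation $\tau_2$ preserving the relation $\sim_2$ given by ``same white endpoint'', and $\sigma\mapsto(\tau_1,\tau_2)$ is a bijection $P_T\to P_1\times P_2$. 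Writing $\alpha_e:=\psi_{bw}$ and $\beta_e:=\psi_{wb}$ for $e=\{b,w\}$, one checks $\psi_{\sigma(b,w)}=\alpha_{\tau_1(e)}$ and $\psi_{\sigma(w,b)}=\beta_{\tau_2(e)}$, so $\mathcal{A}_T(\mathbf{x})=N^{-1}\sum_{\tau_1\in P_1}\sum_{\tau_2\in P_2}\prod_{e\in E}\omega(\alpha_{\tau_1(e)},\beta_{\tau_2(e)})$. Now $\psi_{bw}$ and $\psi_{wb}$ are orthonormal, being the $+1$ and $-1$ eigenvectors of the Hermitian matrix $v_{bw}.\vec{\sigma}$; hence $\psi_{wb}=\mu_e\,\tilde{j}\psi_{bw}$ with $\mu_e:=\omega(\psi_{bw},\psi_{wb})$, and consequently $\omega(\alpha_a,\beta_b)=\mu_b\,\langle\alpha_a,\alpha_b\rangle$ for the standard Hermitian inner product $\langle\cdot,\cdot\rangle$ on $\C^2$, normalised so that $\omega(\psi,\tilde{j}\phi)=\langle\psi,\phi\rangle$. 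Substituting, the scalar $\prod_{e\in E}\mu_{\tau_2(e)}=\prod_{e\in E}\mu_e=N$ factors out of the product over edges and cancels $N^{-1}$ exactly, so
\[ \mathcal{A}_T(\mathbf{x})=\sum_{\tau_1\in P_1}\sum_{\tau_2\in P_2}\prod_{e\in E}\langle\alpha_{\tau_1(e)},\alpha_{\tau_2(e)}\rangle=f_{\sim_1,\sim_2}(A), \]
where $A=\bigl(\langle\alpha_e,\alpha_{e'}\rangle\bigr)_{e,e'\in E}$ is the Gram matrix of the unit spinors $(\alpha_e)_{e\in E}$, hence Hermitian positive semidefinite with $A_{ee}=1$ for all $e$.

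Finally, an edge is recovered from the unordered pair of its two endpoints, which carry opposite colours; so any $\tau\in P_1\cap P_2$ keeps both endpoints of every edge fixed and is therefore the identity, giving $|P_1\cap P_2|=1$. Conjecture D applied to $A$, $\sim_1$, $\sim_2$ then gives $\Real(\mathcal{A}_T(\mathbf{x}))\ge\prod_{e\in E}A_{ee}=1$, which is Conjecture C. The step I expect to demand the most care is the reindexing: the conceptual core is that the single symmetrisation $p_T$ secretly factors, \emph{precisely because $T$ is bipartite}, as the pair of \emph{independent} symmetrisations $P_1\times P_2$ acting on the common index set $E$ — together with the verification that the eigenvector phases $\mu_e$ cancel the $T_\Gamma$-normalisation on the nose. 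No inequality beyond Conjecture D is used anywhere; and since acyclicity is never invoked, the same argument in fact proves $\Real(\mathcal{A}_G)\ge 1$ for every bipartite finite simple graph $G$.
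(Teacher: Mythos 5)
Your proposal is correct and follows essentially the same route as the paper: the paper's root-and-level orientation of the tree is exactly your bipartite black-to-white orientation, and the paper likewise identifies $\mathcal{A}_T(\mathbf{x})$ with $f_{\sim_1,\sim_2}(A)$ (up to a harmless complex conjugation coming from the convention in the identity $\omega(\psi,\tilde{j}\phi)=\langle\phi,\psi\rangle$) for the Gram matrix $A$ of the spinors indexed by edges, with $\sim_1$, $\sim_2$ given by shared source and shared target. You supply more detail than the paper on the factorization $P_T\cong P_1\times P_2$, the cancellation of the normalization, and the fact that $|P_1\cap P_2|=1$, and your closing observation that acyclicity is never used, so the argument covers all bipartite graphs, is a valid and worthwhile strengthening not stated in the paper.
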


\begin{figure}
\begin{center}
\begin{tikzpicture}[
  level distance=11mm,
  level 1/.style={sibling distance=36mm,
                  every node/.style={circle, draw=blue!70!black, fill=blue!20},
                  edge from parent/.style={thick, draw=black, -Latex}},
  level 2/.style={sibling distance=22mm,
                  every node/.style={circle, draw=green!70!black, fill=green!20},
                  edge from parent/.style={thick, draw=green!70!black, Latex-}}, 
  level 3/.style={sibling distance=12mm,
                  every node/.style={circle, draw=orange!70!black, fill=orange!25},
                  edge from parent/.style={thick, draw=green!70!black, -Latex}}, 
  every node/.style={circle, draw, inner sep=1.2pt, minimum size=6mm},
  edge from parent/.style={draw=blue!60!black, thick, -Latex}, 
  edge from parent path={(\tikzparentnode.south) -- (\tikzchildnode.north)}
]
\node {\vlabel}                
  child { node {\vlabel}       
    child { node {\vlabel}     
      child { node {\vlabel} } 
    }
    child { node {\vlabel} }   
  }
  child { node {\vlabel}       
    child { node {\vlabel}     
      child { node {\vlabel} } 
    }
  };
\end{tikzpicture}
\caption{A finite tree with $8$ vertices and $4$ levels}
\label{fig:tree}
\end{center}
\end{figure}
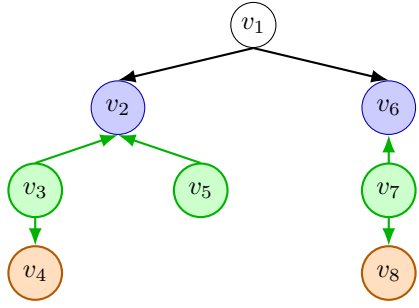

\begin{proof} 

Let $T$ be a finite tree. Choose one of the vertices as root. Let level $0$ denote the root level, then let level $1$ denote the next level and so on. If $l$ is even (resp. odd), orient any edge joining a vertex from level $l$ with a vertex from level $l+1$ from the parent vertex to the child vertex (resp. from the child vertex to the parent vertex), as in Fig. \ref{fig:tree}.

Label the edges as $e_1, \dots, e_m$, oriented as discussed in the previous paragraph. Note that, if $\psi$ and $\phi$ are vectors in $\C^2$, then
\begin{equation} \omega(\psi, \tilde{j} \phi) = \langle \phi,\, \psi \rangle. \label{identity} \end{equation}
Given $\mathbf{x} \in C_T(\R^3)$, let $A$ be the Gram matrix of the corresponding $\psi_{e_a}$, for $a = 1, \dots, m$ (with respect to the hermitian inner product $\langle -, \rangle$ on $\C^2$). Let $e_a = (v_{i_a}, v_{j_a})$ and $e_b = (v_{i_b}, v_{j_b})$ be two edges (oriented as previously discussed) of $T$. We write that
\[ e_a \sim_1 e_b \iff v_{i_a} = v_{i_b} \]
and
\[ e_a \sim_2 e_b \iff v_{j_a} = v_{j_b}. \]
Both $\sim_1$ and $\sim_2$ are equivalence relations on the set of (properly oriented) edges of $T$. Using \eqref{identity}, one may see that
\[ \mathcal{A}_T(\mathbf{x}) = \overline{f_{\sim_1, \sim_2}(A)}. \]
Hence, we deduce that Conjecture D implies Conjecture C.
\end{proof}
It is easy to prove the following proposition, using a reindexing trick.
\begin{proposition} If one of the two equivalence relations $\sim_i$ ($i = 1, 2$) is finer than the other, then Conjecture D holds. \end{proposition}

A star graph is a graph with a root vertex, say $v_1$, and every other vertex, say $v_2, \dots, v_n$ connected to the root $v_1$, and no other edges (see fig. \ref{fig:star_graph}). If $\sim_1$ is the discrete equivalence relation and $\sim_2$ is the indiscrete one, then Conjecture D holds in this case, as it amounts to Marcus's permanent inequality (\cite{Mar1963}). We have thus proved the following.

\begin{proposition} Conjecture C is true if $T$ is a ``star'' graph.
\end{proposition}

\begin{figure}
\begin{tikzpicture}[
  vertex/.style={circle,draw=blue!70!black,fill=blue!15,inner sep=2pt,minimum size=20pt,font=\small},
  edge/.style={very thick,draw=purple!70!black}
]
  \node[vertex, draw=orange!80!black, fill=orange!25, minimum size=24pt] (v1) at (0,0) {$v_1$};

  \node[vertex, draw=green!60!black, fill=green!20] (v2) at (2.6, 1.2) {$v_2$};
  \node[vertex, draw=teal!60!black,  fill=teal!20]  (v3) at (2.6, 0.0) {$v_3$};

  \node[font=\large, text=gray!70] at (2.6, -1.1) {$\vdots$};

  \node[vertex, draw=red!70!black, fill=red!20] (vn) at (2.6, -2.2) {$v_n$};

  \draw[edge] (v1) -- (v2);
  \draw[edge] (v1) -- (v3);
  \draw[edge] (v1) -- (vn);
\end{tikzpicture}
\caption{A star graph}
\label{fig:star_graph}
\end{figure}

The linear graph $LG_n$ with $n$ vertices (cf. fig. \ref{fig:linear_graph}) is the tree having $n$ vertices, say $v_1, \dots, v_n$, with an edge connecting each $v_i$ with $v_{i+1}$, for $i = 1, \dots, n - 1$. 

\begin{theorem} Conjecture C holds for $LG_n$, for $2 \leq n \leq 5$. \label{main-thm} \end{theorem}

We prove Thm. \ref{main-thm} in the next section.

\begin{figure}
\begin{tikzpicture}[
  vertex/.style={circle,draw=blue!70!black,fill=blue!15,inner sep=2pt,minimum size=20pt,font=\small},
  edge/.style={very thick,draw=purple!70!black}
]

\node[vertex,draw=orange!80!black,fill=orange!25] (v1) at (0,0) {$v_1$};

\node[vertex,draw=green!60!black,fill=green!20] (v2) at (2,0) {$v_2$};

\node[vertex,draw=teal!70!black,fill=teal!20] (v3) at (4,0) {$v_3$};

\node[font=\large] (dots) at (5.5,0) {$\cdots$};

\node[vertex,draw=red!70!black,fill=red!20] (vn) at (7,0) {$v_n$};

\draw[edge] (v1) -- (v2);
\draw[edge] (v2) -- (v3);
\draw[edge] (v3) -- (dots);
\draw[edge] (dots) -- (vn);

\end{tikzpicture}
\caption{A ``linear'' graph}
\label{fig:linear_graph}
\end{figure}

\section{Proof of Theorem \ref{main-thm}}

In this proof, we let $v_1 = \psi_{12}$, $v_2 = \psi_{32}$, $v_3 = \psi_{34}$ and $v_4 = \psi_{54}$ and assume that the $v_i$ are normalized for $i = 1, \dots, n$, with respect to the standard hermitian inner product $\langle -, - \rangle$ on $\mathbb{C}^2$. Let $\tilde{j}$ be the (standard) quaternionic structure on $\mathbb{C}^2$, i.e.

\[ \tilde{j}(z_1, z_2)^T = (-\bar{z}_2, \bar{z}_1)^T. \]

We note that
\[ \omega(v_a, \tilde{j} v_b) = \langle v_a, v_b \rangle, \]
for $1 \leq a, b \leq n$.

Let $h_{ab} = \langle v_a, v_b \rangle$, for $1 \leq a, b \leq n$, and let $H = (h_{ab})$. We will write the Kronecker product of, say, $v_a$, $v_b$ and $v_c$, simply as $v_a v_b v_c$. 

\subsection*{Case $n = 2$} The $n = 2$ case is trivial, since the $LG_2$-amplitude is identically equal to $1$. 

\subsection*{Case $n = 3$} We have that
\[
\mathcal{A}_{LG_3}(\mathbf{x}) = 1 + \langle v_1, v_2 \rangle \langle v_2, v_1 \rangle = 1 + |\langle v_1, v_2 \rangle|^2,
\]
We thus see that Conjecture C holds for $LG_3$.

\subsection*{Case $n = 4$}
We have that
\begin{align*} & \operatorname{Re}(\mathcal{A}_{LG_4}(\mathbf{x})) \\
= & \, \lVert v_1 v_2 v_3 \rVert^2 + \langle v_2 v_1 v_3, v_1 v_2 v_3 \rangle + \langle v_1 v_2 v_3, v_1 v_3 v_2 \rangle + \operatorname{Re} \langle v_2 v_1 v_3, v_1 v_3 v_2 \rangle \\
= & \, 1 + |h_{12}|^2 + |h_{23}|^2 + \frac{1}{2} \left( h_{12} h_{23} h_{31} + h_{13} h_{32} h_{21} \right) \\
= & \, \frac{1}{2} + \frac{1}{2}|h_{12}|^2 + \frac{1}{2}|h_{23}|^2 + \frac{1}{2} \operatorname{per}(H) - \frac{1}{2}|h_{13}|^2.
\end{align*}

Lieb's inequality for hermitian positive semidefinite $2 \times 2$ block matrices (\cite{Lieb1966}) , using the blocks corresponding to $\{1, 3\}$ and $\{2 \}$ gives us that
\[ \operatorname{per}(H) \geq 1 + |h_{13}|^2. \]

Using this lower bound for the permanent of $H$ then gives the desired inequality, namely that
\[ \operatorname{Re}(\mathcal{A}_{LG_4}(\mathbf{x})) \geq 1, \]
for all $\mathbf{x} \in C_{LG_4}(\mathbb{R}^3)$.

\subsection*{Case $n = 5$} 

Let $a = h_{12}$, $b = h_{23}$ and $c = h_{34}$. Also write $u = h_{13}$ and $v = h_{24}$. By multiplying each $v_a$ by a phase factor $e^{i \phi_a}$ (if needed), for $a = 1, \dots, 4$, we may assume, WLOG, that $a$, $b$ and $c$ are real and nonnegative. One may check that
\begin{equation} \operatorname{Re}(\mathcal{A}_{LG_4}(\mathbf{x})) = 1 + a^2 + b^2 + c^2 + a^2 c^2 + \operatorname{Re}(a b u + b c \bar{v} + a c u \bar{v}). \label{eq1} \end{equation}  We need the following lemma.

\begin{lemma} If
\[ \begin{pmatrix} 1 & \alpha & \gamma \\
\bar{\alpha} & 1 & \beta \\
\bar{\gamma} & \bar{\beta} & 1 \end{pmatrix}
\]
is hermitian positive semidefinite, then
\[ \gamma = \alpha \beta + \sqrt{(1 - |\alpha|^2) (1 - |\beta|^2)} \xi, \]
for some $|\xi| \leq 1$.
\end{lemma}

\begin{proof}
The determinant of the matrix in the lemma is nonnegative, since the matrix is hermitian positive semidefinite. Hence
\[ 1 + 2 \operatorname{Re}(\alpha \beta \bar{\gamma}) - |\alpha|^2 - |\beta|^2 - |\gamma|^2 \geq 0. \]
Completing the squares, this is equivalent to
\[ |\gamma - \alpha \beta|^2 \leq (1 - |\alpha|^2) (1 - |\beta|^2), \]
from which the proof follows.
\end{proof}

We apply this lemma to the principal minors $\{1, 2, 3\}$ and $\{2, 3, 4\}$. We obtain
\[ u = ab + s \xi, \qquad v = bc + t \eta, \]
where
\[ s = \sqrt{(1 - a^2) (1 - b^2)}, \qquad t = \sqrt{(1 - b^2)(1 - c^2)} \]
and
\[ |\xi| \leq 1, \qquad |\eta| \leq 1. \]

Substituting into \eqref{eq1}, we get
\[ \operatorname{Re}(\mathcal{A}_{LG_5}(\mathbf{x}) )- 1 = P + \operatorname{Re}(A\xi + B \bar{\eta} + C \xi \bar{\eta}), \]
where
\[ P = a^2 + b^2 + c^2 + a^2 b^2 + a^2 c^2 + b^2 c^2 + a^2 b^2 c^2, \]
and
\[ A = abs(1 + c^2), \qquad B = bct(1 + a^2), \qquad C = acst. \]

Note that $A$, $B$, $C$ and $P$ are nonnegative, under our assumptions. Let
\[ \vec{x} = (\operatorname{Re}(\xi), \operatorname{Im}(\xi)), \qquad \vec{y} = (\operatorname{Re}(\eta), \operatorname{Im}(\eta)). \]
The problem is thus to minimize
\begin{equation} \Phi(\vec{x}, \vec{y}) = P + A e_1.\vec{x} + B e_1.\vec{y} + C \vec{x}.\vec{y}, \qquad \Vert \vec{x} \rVert \leq 1,  \lVert \vec{y} \rVert \leq 1. \end{equation}
We want to show that $\Phi(\vec{x},  \vec{y}) \geq 0$ for any $\vec{x}, \vec{y} \in \mathbb{R}^2$ such that  $\Vert \vec{x} \rVert \leq 1$ and $\lVert \vec{y} \rVert \leq 1$.

Fix $\vec{x} \in \mathbb{R}^2$, with $\lVert \vec{x} \rVert \leq 1$. With $\vec{x}$ fixed, $\Phi(\vec{x}, \vec{y})$ is an affine function of $\vec{y}$, so its minimum occurs at some $\vec{y}_0$, with $\lVert \vec{y}_0 \rVert  = 1$. Hence its minimum occurs at
\[ \vec{y}_0 = - \frac{B e_1 + C \vec{x}}{\lVert B e_1 + C \vec{x} \rVert}, \]
provided $B e_1 + C \vec{x} \neq 0$, and we have
\begin{equation} \Phi(\vec{x}, \vec{y}_0) = P + Ae_1.\vec{x} - \lVert B e_1 + C \vec{x} \rVert. \label{imp-eq} \end{equation}

The case where $B e_1 + C \vec{x} = 0$, leads to
\[ \Phi(\vec{x}, \vec{y}) = P + A e_1.\vec{x} \geq P - A. \]

We need another lemma.

\begin{lemma} The following upper bounds for $A$, $B$ and $C$ hold.
\begin{align*} 2 A & \leq (1 + c^2) (a^2 + b^2 - a^2 b^2) \\
2 B & \leq (1 + a^2)(b^2 + c^2 - 2 b^2 c^2) \\
2 C & \leq (1 - b^2)(a^2 + c^2 - 2 a^2 c^2).
\end{align*} \label{lem2}
\end{lemma}

\begin{proof} Note that
\[ 2 A = 2(a\sqrt{1 - b^2})(b\sqrt{1 - a^2}) \leq a^2(1-b^2) + b^2(1-a^2), \]
proving the first inequality in the lemma. The other two inequalities can be proved in a similar way.
\end{proof}

Using the upper bound for $A$ in this lemma, we easily deduce that $P - A \geq 0$ (after expanding and simplifying). This shows that $\Phi(\vec{x}, \vec{y}) \geq 0$ in the case where $B e_1 + C \vec{x} = 0$.

We now let $t = \operatorname{Re}(\xi)$. We note that $|t| \leq 1$, since $\lVert \vec{x} \rVert \leq 1$. Using \eqref{imp-eq}, we then have that
\[ \phi(\vec{x}, \vec{y}_0) = P + At - \sqrt{B^2 + 2BCt + C^2 \lVert \vec{x} \rVert^2} \geq P + At - \sqrt{B^2 + 2BCt + C^2}. \]
Let
\[ g(t) = P + At - \sqrt{B^2 + 2BCt + C^2}, \]
for $t \in [-1, 1]$. If we can show that $g(t) \geq 0$ for any $t \in [-1, 1]$, this would be enough to imply that $\Phi(\vec{x}, \vec{y}) \geq 0$, for any $\vec{x}, \vec{y} \in \mathbb{R}^2$, such that $\lVert \vec{x} \rVert \leq 1$ and $\lVert \vec{y} \rVert \leq 1$.

WLOG, we may assume that $A$, $B$ and $C$ are all positive. Indeed, $g$ depends continuously on $A$, $B$ and $C$, so if we show that $g(t)$ is nonnegative for all $t \in [-1, 1]$ under the positivity assumption on $A$, $B$ and $C$, then a continuity argument would show that $g(t)$
would remain nonnegative if some (or all) of $A$, $B$ and $C$ vanish. We note that the expression inside the square root vanishes precisely at
\[
t_0 = -\frac{B^2 + C^2}{2 BC},
\]
so that $|t_0| \geq 1$. Hence $g$ is differentiable on $(-1, 1)$ and continuous on $[-1, 1]$. Hence $g$ attains its minimum either at a boundary point, or at some critical point which lies in $(-1, 1)$.

We first check the endpoint $t = -1$. We have
\[ g(-1) = P - A - |B - C|. \]
If $B \geq C$, then
\[ g(-1) = P - A - B + C \geq P - A - B. \]
Moreover, using lemma \ref{lem2}, we have
\begin{align*} & P - A - B \\
\begin{split}
\geq & \,a^2 + b^2 + c^2 + a^2b^2 + a^2c^2 + b^2c^2 + a^2b^2c^2 - \frac{1}{2}(1 + c^2) (a^2 + b^2 - a^2 b^2) - \\
& \qquad - \frac{1}{2}(1 + a^2)(b^2 + c^2 - 2 b^2 c^2)
\end{split} 
\\
\geq & \, 3a^2b^2c^2 + \frac{3}{2}a^2b^2 + \frac{1}{2}a^2 \frac{3}{2}b^2c^2 + \frac{1}{2}c^2 \\
\geq & \, 0.
\end{align*}
Hence $g(-1) \geq 0$ in this case. On the other hand, if $B \leq C$, then
\[ g(-1) = P - A + B - C \geq P - A - C. \]
Using lemma \ref{lem2}, we have
\begin{align*} & \, P - A - C \\
\begin{split}
\geq & \, a^2 + b^2 + c^2 + a^2b^2 + a^2c^2 + b^2c^2 + a^2b^2c^2 - \frac{1}{2}(1 + c^2)(a^2 + b^2 - a^2 b^2) - \\
& \qquad - \frac{1}{2}(1 - b^2)(a^2 + c^2 - 2 a^2 c^2)
\end{split} 
\\
\geq & \, a^2b^2c^2 + \frac{5}{2}a^2b^2 + \frac{3}{2}a^2c^2 + b^2c^2 \frac{1}{2}b^2 + \frac{1}{2}c^2 \\
\geq & \,0.
\end{align*}
Hence $g(-1) \geq 0$ in this case too.

We next check the endpoint $t = 1$. We have
\[ g(1) = P + A - B - C \geq P - B - C. \]
Using lemma \ref{lem2}, we have
\begin{align*} & \, P - B - C \\
\geq & \, a^2 + b^2 + c^2 + a^2b^2 + a^2c^2 + b^2c^2 + a^2b^2c^2 - \frac{1}{2}(1 + a^2)(b^2 + c^2 - 2 b^2 c^2) - \\
& \qquad -\frac{1}{2}(1 - b^2)(a^2 + c^2 - 2 a^2 c^2) \\
\geq & \, a^2b^2c^2 + a^2b^2 + \frac{3}{2} a^2c^2 + \frac{5}{2}b^2c^2 + \frac{1}{2}a^2 + \frac{1}{2}b^2 \\
\geq & \, 0.
\end{align*}
Hence $g(1) \geq 0$.

It remains to consider the potential critical points of $g$ which lie in $(-1, 1)$. Assume that $g$ has a critical point $t_c \in (-1, 1)$. Then
\[ g'(t_c) = 0, \]
so that
\[ A = \frac{BC}{\sqrt{B^2 + C^2 + 2BCt_c}}, \]
hence
\[ \sqrt{B^2 + C^2 + 2BCt_c} = \frac{BC}{A}. \]
We thus obtain
\[ g(t_c) = P - \frac{A^2B^2 + A^2C^2 + B^2C^2}{2ABC}. \]
Simplifying with the help of Julia's Symbolics package, we obtain
\[ g(t_a) = \frac{N}{2(1 + a^2)(1 + c^2)}, \]
where
\begin{equation*} 
\begin{split}
N & = a^2 + b^2 + c^2 + 3 a^4 + 3 a^2 b^2 + 2 a^2 c^2 + 3 b^2 c^2 + 3 c^4 + 5 a^4 c^2 + 8 a^2 b^2 c^2 + 5 a^2 c^4 + \\ 
& \qquad + a^4 b^2 c^2(1 - c^2) + 4 a^4 c^4 + a^2 b^2 c^4.
\end{split}
\end{equation*}
Since $c^2 \leq 1$, we can thus see that $N \geq 0$, from which we conclude that $g(t_c) \geq 0$, thus finishing the proof of Thm. \ref{main-thm}.

\section{Numerical Simulations}

For $n = 2$, Conjecture B is trivially true. For $3 \leq n \leq 6$, the following numerical simulation was run.

\begin{enumerate}
\item Generate the collection $\mathcal{G}_n$ of all non-oriented finite simple graphs with $n$ vertices, up to isomorphism.
\item For each $G \in \mathcal{G}_n$, generate (pseudo-)randomly $N = 10000$ configurations of $n$ distinct points in $\R^3$,  compute the corresponding $G$-amplitudes and store the results in a vector.
\item Check if the minimum real part of the stored amplitudes is greater or equal to $1$ minus a very small number, to allow for cases where the amplitude is precisely $1$, though numerically a tiny bit smaller than $1$, to go through. In case the check fails, this means we found a counterexample to the statement
\[ \Real(\mathcal{A}_G(\mathbf{x})) \geq 1 \]
and we can exit the program.
\end{enumerate}

It turns out that my program could not find any counterexample to the previous statement, for $n \leq 6$. It is thus tempting to conjecture that
\[ \Real(\mathcal{A}_G(\mathbf{x})) \geq 1 \]
for any $\mathbf{x} \in C_G(\R^3)$, assuming $G$ is connected. It is not true for general non-connected graphs. Indeed, it is enough to find a graph $G$ and a configuration $\mathbf{x} \in C_G(\R^3)$ with $\mathcal{A}_G(\mathbf{x})$ having non-zero imaginary part (there exist such a $(G, \mathbf{x})$). Let us say that the phase angle of $\mathcal{A}_G(\mathbf{x})$ is $\theta$, where $\theta \in (-\pi/2, \pi/2)$, $\theta \neq 0$. WLOG, we may assume that $0 < \theta < \pi/2$ (indeed, if this is not the case, we may replace $\mathbf{x}$ with $-\mathbf{x}$). Let $M$ be a positive integer such that $\pi/2 \leq M \theta \leq \pi$. Then replace $G$ with the disjoint union of $M$ copies of $G$ and replace $\mathbf{x}$ with $\mathbf{x}$ ``repeated'' $M$ times in total. Then, from the multiplicative property of $\mathcal{A}_G$, prop. \ref{multiplicative}, it follows that the corresponding amplitude is the original amplitude to the power $M$ and therefore definitely violates the inequality
\[ \Real(\mathcal{A}_G(\mathbf{x})) \geq 1. \]
The author wonders though if the latter inequality holds if $G$ is connected.

Our numerical simulations provide evidence for Conjecture B. We also ran some similar numerical simulations for small values of $n$ for our matrix analysis conjecture, namely Conjecture D and also could not find any counterexample.

\section{Amplitudes as tensor network contractions}

A tensor network is a graph $G = (V, E)$ together with the following data attached to $G$:
\begin{enumerate}
\item a vector space $\mathcal{H}_e$ attached to each edge $e \in E$,
\item a vector space $\mathcal{H}_v$ attached to each vertex $v \in V$,
\item an element $T_v \in \mathcal{H}_v$,
\end{enumerate}
such that
\[ \mathcal{H}_v = \bigotimes_{e \in N(v)} \mathcal{H}_e, \]
where $N(v)$ is the set of edges in $G$ that are incident with $v$. It is a graphical representation for the following construction. First, form
\[ \widetilde{T} = \otimes_{v \in V} T_v, \]
then, for each edge $e \in E$, say joining $v$ with $v'$ ($v, v' \in V$), contract the index of $T_v$ corresponding to $e$ with the index of $T_{v'}$ corresponding to $e$, then repeat this process till all edges in $E$ have been used once. At the end of the process, one gets a tensor, say $T$. The tensor network is just a graphical representation of the tensor $T$. One says that $T$ is the contraction of the tensor network.

Tensor networks have found applications in statistical physics, probability and statistics and machine learning. We shall not review tensor networks here (see instead the reviews \cite{Orus2014}, \cite{Orus2019}, \cite{BriChu2017} and the book \cite{Ran2020}, along with the references therein).

A closed tensor networks is a tensor network without ``open-ended'' edges. It now should be clear that our amplitudes are nothing but contractions of some closed tensor networks, whose underlying graph is $G$, with the difference that contraction along each edge $e$ is done using the complex symplectic form  $\omega(-, -)$.

\section{Conclusion}

We have generalized the Atiyah problem on configurations of points using graph theory and tensors. We hope our work will prove helpful in finding a solution to the original problem. Either way, the conjectures we have presented are, in the author's humble opinion, interesting geometric inequalities.

We also wonder whether they are linked to Physics. The choice of terminology, ``amplitude'', is due to the existence of similarities with probability amplitudes in quantum physics, which is an aspect we wish to investigate more in the future.

\section{Declarations}

\emph{No datasets were generated or analysed during the current study, apart from the randomly generated configurations in the numerical simulations. The author has no competing interests to declare that are relevant to the content of this article.}

\end{document}